\newtheorem{theorem}[equation]{Theorem}
\newtheorem{lemma}[equation]{Lemma}
\newtheorem{proposition}[equation]{Proposition}
\newtheorem{corollary}[equation]{Corollary}
\newtheorem{conjecture}[equation]{Conjecture}
\theoremstyle{remark}
\newtheorem{definition}[equation]{Definition}
\newtheorem*{remark}{\bf Remark}
\numberwithin{equation}{section}
\numberwithin{table}{section}
\newcommand{\wt}{\widetilde}
\newcommand{\wh}{\widehat}
\renewcommand{\frak}{\mathfrak}
\DeclareMathOperator{\HF}{HF}
\DeclareMathOperator{\im}{Im}
\DeclareMathOperator{\coker}{coker}
\DeclareMathOperator{\Span}{Span}
\DeclareMathOperator{\rank}{rank}
\DeclareMathOperator{\Spin}{Spin}
\DeclareMathOperator{\nd}{nd}
\DeclareMathOperator{\PD}{PD}
\newcommand{\Z}{{\mathbb{Z}}}
\newcommand{\C}{{\mathbb{C}}}
\newcommand{\A}{{\mathbb{A}}}
\newcommand{\B}{{\mathbb{B}}}
\newcommand{\F}{{\mathbb{F}}}
\newcommand{\X}{{\mathbb{X}}}
\title{Heegaard Floer Surgery Formula and Cosmetic Surgeries}
\author{ALAN DU\thanks{1200 E California Blvd, Pasadena CA 91125 USA, aydu@caltech.edu} \\
Caltech}
\date{February 14, 2026}
\begin{document}
\maketitle
\begin{abstract}
Two Dehn surgeries on a knot are called cosmetic if they yield homeomorphic three-manifolds. We show for a certain family of null-homologous knots in any closed orientable three-manifold, if the knot admits cosmetic surgeries with a pair of positive surgery coefficients, then the coefficients are both greater than $1$. In addition, for this family of knots, we show that $+1/q$ Dehn surgery is not homeomorphic to the original three-manifold. The proofs of these results use the mapping cone formula for the Heegaard Floer homology of Dehn surgery in terms of the knot Floer homology of the knot; we provide a new proof of this formula for integer surgeries in $\Spin^c$ structures with nontorsion first Chern class.
\end{abstract}

Keywords: Heegaard Floer Homology, knot Floer homology, Dehn surgery, cosmetic surgery

\section{Introduction}

Heegaard Floer homology is a powerful invariant of closed oriented three-manifolds introduced by Ozsv\'ath and Szab\'o \cite{oz-sz-hf}. It has been used to prove many results about three-manifold topology, especially Dehn surgery. In fact, the Heegaard Floer homology of any surgery on a knot can be computed from the knot Floer complex of the knot \cite{oz-sz-integer,oz-sz-rational,zemke-hf-surgery-formula}. 

In this paper, we will use Heegaard Floer homology to study cosmetic surgeries on a certain class of knots in arbitrary closed oriented three-manifolds. We recall the definition of cosmetic surgeries.

\begin{definition}
If two Dehn surgeries on a knot yield homeomorphic manifolds, then these two surgeries are \textit{cosmetic}. If in addition, the two surgeries are orientation-preserving homeomorphic, then these two surgeries are \textit{purely cosmetic}.
\end{definition}

Cosmetic surgeries are very rare; in fact, the following cosmetic surgery conjecture has been proposed:

\begin{conjecture}[\cite{kirby-problem-list-1995} Problem 1.81]
Suppose $K$ is a knot in a closed oriented three-manifold $Y$ such that $Y-K$ is irreducible and not homeomorphic to the solid torus. If two different Dehn surgeries on $K$ are purely cosmetic, then there is a homeomorphism of $Y-K$ which takes one slope to the other.    
\end{conjecture}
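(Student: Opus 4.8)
The conjecture asks for something strictly finer than a homeomorphism of the two filled manifolds: it demands a self-homeomorphism of the complement $Y-K$ carrying one filling slope to the other. My plan therefore separates into a \emph{numerical} half, in which Heegaard Floer homology restricts which pairs of slopes can be purely cosmetic, and a \emph{geometric} half, in which the surviving slopes are shown to be interchanged by an actual symmetry of the complement. First I would exploit the orientation-sensitive invariants that a purely cosmetic pair must share: the Casson--Walker invariant $\lambda$, together with the correction terms $d(Y_r(K),\mathfrak{s})$ and the graded $\Z[U]$-module $\HF^{+}(Y_r(K),\mathfrak{s})$. Because $\lambda$ transforms predictably under change of slope and orientation reversal, while the $d$-invariants encode the surgery parameters through the surgery formula, I expect these to force the two slopes to be negatives of one another and to satisfy a congruence of the type $q^{2} \equiv -1 \pmod{p}$ for $r = p/q$, exactly as in the $S^{3}$ case.

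With the slopes pinned to a family $\{r,-r\}$, the second step is to extract rigidity from the integer and rational surgery mapping cone formula --- the formula this paper reestablishes for $\Spin^{c}$ structures of nontorsion first Chern class. The mapping cone presents $\HF^{+}$ of the surgery as the homology of an explicit complex assembled from the subquotient complexes of $\mathit{CFK}^{\infty}(K)$ and the projections $v_{s},h_{s}$; matching this presentation for $r$ against the one for $-r$ should translate into symmetry constraints on the local $h$-invariants $V_{s}$ and $H_{s}$ of $K$. For the restricted classes of null-homologous knots treated here these constraints are already sharp enough to eliminate every candidate slope, which yields the unconditional statements of the abstract; in the general case they should at least cut the admissible slopes down to a short, explicit list.

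The hard part, and the reason the conjecture remains open, is the transition from numerical agreement to an honest homeomorphism of $Y-K$. Heegaard Floer homology is blind to the complement beyond the surgered data it records, so even a complete list of surviving slope pairs does not by itself manufacture the map the conjecture demands. Here I would invoke the standing hypotheses --- $Y-K$ irreducible and not a solid torus --- to bring geometrization to bear: on a hyperbolic complement, Mostow rigidity and the structure of the thick part bound the finite mapping class group, and the classification of exceptional fillings limits how two distinct slopes can yield homeomorphic results, so one hopes to promote a Floer-permitted pair to a genuine isometry of the complement interchanging the two slopes. Reconciling this geometric reconstruction with the Floer obstructions in full generality --- rather than for a single prescribed family of knots --- is the principal obstacle, and is precisely why this paper proves unconditional results only for a specific family while leaving the conjecture itself open.
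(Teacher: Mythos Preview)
The statement you were asked to prove is a \emph{conjecture}, not a theorem, and the paper does not attempt a proof of it. It is quoted from Kirby's problem list as motivation; the paper's actual results are the two theorems on blow-up unknotted knots, which give partial evidence but do not resolve the conjecture. So there is no ``paper's own proof'' to compare your proposal against.

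Your proposal is not a proof but a research outline, and you acknowledge this yourself in the final paragraph. The numerical half is reasonable as a strategy and does mirror known work in the $S^3$ case, but even there it does not force the slopes to $\{r,-r\}$ without additional hypotheses; for knots in general $Y$ the Casson--Walker and $d$-invariant arguments you invoke are not yet established in the required form. The decisive gap, which you correctly identify, is the geometric half: nothing in the Heegaard Floer package produces a self-homeomorphism of $Y-K$ carrying one slope to the other, and your appeal to Mostow rigidity and exceptional-filling bounds is a hope rather than an argument---those tools constrain the isometry group but do not manufacture an isometry interchanging two prescribed slopes from the mere coincidence of filled invariants. As written, the proposal does not close this gap, and the conjecture remains open.
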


The existence of cosmetic surgeries has been ruled out in many cases for knots in $S^3$. For instance, Ozsv\'ath and Szab\'o proved that if $K\subset S^3$ admits cosmetic rational surgeries, then either the resulting manifold is an $L$-space or the surgery slopes have opposite signs \cite{oz-sz-rational}. Much more is known when considering purely cosmetic surgeries. Hanselman proved that any pair of purely cosmetic Dehn surgeries on knots in $S^3$ must have slopes that are $\pm 2$ or $\pm 1/q$ for some integer $q$ \cite{hanselman-hf-cosmetic}.

Less is known for knots in arbitrary three-manifolds. Ni found an obstruction to cosmetic surgeries on knots in three-manifolds satisfying a certain condition on their Thurston norms \cite{ni-thurston-norm-cosmetic}. 

The main theorem in this paper is an analogue of Ozsv\'ath and Szab\'o's result. We consider a class of knots which were shown by Ni to have special properties on their knot Floer homology.

\begin{definition}
Let $Y$ be a closed, oriented three-manifold, and $K\subset Y$ a null-homologous knot. We say that $K$ is \textit{blow-up bounding} if there exists a taut Seifert surface $F$ for $K$, some natural number $N$, and a properly-embedded smooth disk $G$ inside $X=(Y\times[0,1])\# N\overline{\C P^2}$ with $\partial G=K\times\{0\}$ and $[G]=\iota_*[F]\in H_2(X,K\times\{0\})$ for $\iota:(Y,K)\to(X,K\times\{0\})$ the inclusion.

We say that $K$ is \textit{blow-up unknotted} if $K$ is obtained from the unknot in $Y$ by a sequence of crossing changes that change a positive crossing to a negative one. 
\end{definition}

By Kirby calculus, $K$ is blow-up bounding if $K$ is blow-up unknotted.

\begin{remark}
The property of $K$ being blow-up bounding implies that $K$ is null homotopic.    
\end{remark}

\begin{theorem}
Let $K\subset Y$ be a blow-up unknotted knot, and suppose $0<r<s$ are distinct positive rational numbers with the property that $Y_r(K)\cong\pm Y_s(K)$. Then $r,s>1$ or $K$ is the unknot. 
\end{theorem}

We also provide progress on the knot complement problem for blow-up unknotted knots.

\begin{theorem}
Let $K\subset Y$ be a nontrivial blow-up bounding knot and $q\ge 2$, then $Y_{1/q}(K)\ncong\pm Y$. Furthermore, if $K$ is a nontrivial blow-up unknotted knot, then $Y_1(K)\ncong\pm Y$. 
\end{theorem}

The proofs of these theorems rely on a mapping cone formula that relates the Heegaard Floer homology of Dehn surgery on null-homologous knots in arbitrary closed, oriented three-manifolds with the knot Floer chain complex. This result was proven for Heegaard Floer homology in $\Spin^c$ structures whose first Chern classes are torsion by Ozsv\'ath and Szab\'o, and the formula in nontorsion $\Spin^c$ structures for zero surgeries was proven by Ni \cite{oz-sz-integer,oz-sz-rational,ni-property-g}. A recent result by Zemke provides a very general surgery formula via an equivalence of certain Lagrangians with local systems in the Fukaya category \cite{zemke-hf-surgery-formula}. Perhaps of independent interest, we provide a conceptually-simpler proof of the integer surgery formula for nontorsion $\Spin^c$ structures, which generalizes Ozsv\'ath and Szab\'o's and Ni's results.

\section{Preliminaries on Heegaard Floer Homology}

In this section, we establish some notation and collect some results, most of which are copied from \cite{oz-sz-integer}, that will be needed in later sections.

Let $W:Y_1\to Y_2$ be an oriented cobordism, with $Y_1,Y_2$ connected. Given $\frak s\in\Spin^c(W)$, there is a homomorphism
$$F_{W,\frak s}^\circ: HF^\circ(Y_1,\frak s\vert Y_1)\to HF^\circ(Y_2,\frak s\vert Y_2),$$ induced by a chain map
$$f_{W,\frak s}^\circ: CF^\circ(Y_1,\frak s\vert Y_1)\to CF^\circ(Y_2,\frak s\vert Y_2).$$

Among other properties, these maps satisfy a conjugation invariance. For $\frak t$ a $\Spin^c$ structure (in either three or four dimensions), let $\overline{\frak t}$ denote its conjugate. There is an isomorphism $\frak I_Y:HF^\circ(Y,\frak t)\to HF^\circ(Y,\overline{\frak t})$.

\begin{theorem}[\cite{oz-sz-4-mflds} Theorem 3.6]\label{thm:conjugation-invariance}
Let $W$ be a cobordism from $Y_1$ to $Y_2$ as above. Then
$$F_{W,\frak s}^\circ=\frak I_{Y_2}\circ F_{W,\overline{\frak s}}^\circ \circ\frak I_{Y_1}.$$
\end{theorem}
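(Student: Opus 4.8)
The plan is to reduce the statement to the case of an elementary cobordism (a single handle attachment) and then verify conjugation invariance for each handle index separately, exploiting the symmetry of a Heegaard (multi-)diagram under reversing the orientation of the Heegaard surface and relabelling the attaching curves.

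First I would recall that $F_{W,\frak s}^\circ$ is defined by choosing a handle decomposition $W=W_1\cup\cdots\cup W_n$ into cobordisms $W_i$ each consisting of handles of a single index, together with the composition $F_{W,\frak s}^\circ=\sum F_{W_n,\frak s_n}^\circ\circ\cdots\circ F_{W_1,\frak s_1}^\circ$ over tuples $(\frak s_1,\dots,\frak s_n)$ that glue up to $\frak s$. Since restriction of $\Spin^c$ structures commutes with conjugation and conjugation of a glued $\Spin^c$ structure is the gluing of the conjugates, conjugating $\frak s$ merely conjugates the whole tuple, bijectively; moreover $\frak I_{Y}\circ\frak I_{Y}=\id$, since reversing orientation and relabelling twice returns the original diagram. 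Inserting $\frak I_{Y^{(i)}}\circ\frak I_{Y^{(i)}}$ between consecutive factors gives
\[
\frak I_{Y_2}\circ F_{W,\overline{\frak s}}^\circ\circ\frak I_{Y_1}=\sum\big(\frak I_{Y_2}\,F_{W_n,\overline{\frak s_n}}^\circ\,\frak I_{Y^{(n-1)}}\big)\circ\cdots\circ\big(\frak I_{Y^{(1)}}\,F_{W_1,\overline{\frak s_1}}^\circ\,\frak I_{Y_1}\big),
\]
so it suffices to prove the theorem for an elementary cobordism. For a one-handle or three-handle cobordism the map is, up to diffeomorphism, stabilization by a standard diagram for a connected sum of copies of $S^1\times S^2$ together with insertion or contraction of a distinguished top generator; this diagram has a manifest reflection symmetry, the distinguished generator is conjugation-invariant, and since $\frak I$ acts by the tautological identification of intersection points a direct check intertwines these maps with their conjugates.

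The main case is $W$ a two-handle cobordism attached along a framed link $L\subset Y_1$. Here $F_{W,\frak s}^\circ$ is computed from a pointed Heegaard triple $(\Sigma,\alpha,\beta,\gamma,z)$ subordinate to $L$ by a count $F_{W,\frak s}^\circ(\xi)=\sum_{\psi:\ \frak s_z(\psi)=\frak s}\#\MM(\psi)\,(\xi\cdot\Theta)$ over Maslov-index-zero homotopy classes $\psi$ of Whitney triangles. The key observation is that $(-\Sigma,\gamma,\beta,\alpha,z)$ is again a pointed Heegaard triple subordinate to $L$, and reversing the complex structure on $\Sigma$ and on the triangle domain gives a bijection between $\MM(\psi)$ for the first triple and a moduli space of triangles for the second in which the two "outer" Floer inputs are interchanged, while the decoration $\frak s_z(\psi)$ becomes its conjugate. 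Combining this with the tautological identification of generators underlying $\frak I$ and with the conjugation-invariance of the generator $\Theta$ of $HF^\circ$ of the connected sum of $S^1\times S^2$'s yields the chain-level identity $\frak I_{Y_2}\circ f_{W,\overline{\frak s}}^\circ=f_{W,\frak s}^\circ\circ\frak I_{Y_1}$, which descends to homology.

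The hard part will be the bookkeeping in the two-handle case: one must (i) make the orientation-reversal bijection on moduli spaces of holomorphic triangles precise and compatible with the coherent orientations, so that the desired identity holds on the chain level with the correct signs rather than merely up to sign on homology, and (ii) verify that the $\Spin^c$ labelling $\frak s_z$ of a triangle class transforms to its conjugate under the symmetry $(\Sigma,\alpha,\beta,\gamma)\mapsto(-\Sigma,\gamma,\beta,\alpha)$, which requires unwinding the definition of $\frak s_z(\psi)$ via its Poincar\'e-dual (or $\epsilon$-cochain) description. Everything else — the reduction to elementary cobordisms, the one- and three-handle cases, and independence of the auxiliary choices — is formal once these two verifications are in place.
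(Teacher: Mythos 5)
The paper does not prove this statement at all: it is quoted verbatim from Ozsv\'ath--Szab\'o (Theorem 3.6 of the four-manifolds paper) and used as a black box in Section 3 (to identify $F^+_{W,\frak s}$ with $F^+_{W,\overline{\frak s}}$ for the nonspecial $\Spin^c$ structures), so there is no in-paper argument to compare yours against. Your sketch reconstructs the standard Ozsv\'ath--Szab\'o proof and its architecture is sound: reduce to the handle-type pieces of $W$, handle $1$- and $3$-handles via the symmetry of the standard stabilizing diagram and conjugation-invariance of the distinguished generator, and for $2$-handles use the correspondence between holomorphic triangles for $(\Sigma,\alpha,\beta,\gamma,z)$ and for $(-\Sigma,\gamma,\beta,\alpha,z)$, under which generators are tautologically identified and $\frak s_z(\psi)$ is replaced by its conjugate. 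Two refinements you should make if you write this up. First, the cobordism map is \emph{defined} as the composite $F^\circ_{W_3,\frak s\vert_{W_3}}\circ F^\circ_{W_2,\frak s\vert_{W_2}}\circ F^\circ_{W_1,\frak s\vert_{W_1}}$ for a fixed decomposition into $1$-, $2$-, and $3$-handle pieces, not as a sum over tuples of $\Spin^c$ structures gluing to $\frak s$; since conjugation commutes with restriction, this actually makes your reduction cleaner (no bijection of tuples is needed). Second, in the $2$-handle case the reversed triple $(-\Sigma,\gamma,\beta,\alpha,z)$ has the $\Theta$-class sitting in the \emph{other} tensor factor and is subordinate to the framed link only after this reordering, so you must invoke (or prove) the invariance statement that such reordered triples compute the same cobordism map; together with the coherent-orientation and $\frak s_z$-conjugation checks you already flag, that is where the genuine content of the proof lies, and it is exactly what Ozsv\'ath--Szab\'o carry out. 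For the applications in this paper, which work with mod $2$ coefficients and only use the induced maps on homology, the sign bookkeeping is harmless, but the theorem as cited is an integral statement, so a complete write-up cannot skip it.
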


Given a chain map $f:A\to B$, let $MC(f)$ be the mapping cone of $f$, namely, $MC(f)=A\oplus B$ with the differential
$$\partial=\begin{pmatrix}
    \partial_A & 0 \\
    f & \partial_B
\end{pmatrix}.$$

\begin{theorem}[\cite{oz-sz-integer} Theorem 3.1]\label{thm:oz-sz-les}
Let $Y$ be a closed, oriented three-manifold, $K\subset Y$ a null-homologous knot. Fix an integer $n$ and a positive integer $m$. Then, there is a long exact sequence
$$\cdots\to HF^+(Y_n(K))\to HF^+(Y_{m+n}(K))\to\bigoplus^m HF^+(Y)\to\cdots$$ and also a corresponding exact sequence using $\widehat{HF}$ in place of $HF^+$. There are $\Z[U]$-equivariant chain maps
$$f_1^+:CF^+(Y_n(K))\to CF^+(Y_{m+n}(K))$$
$$f_2^+:CF^+(Y_{m+n}(K))\to\bigoplus^m CF^+(Y)$$
$$f_3^+:\bigoplus^m CF^+(Y)\to CF^+(Y_n(K))$$
inducing the maps in the long exact sequence, and $\Z[U]$-equivariant quasi isomorphisms
$$\phi^+: CF^+(Y_n(K))\to MC(f_2^+)$$
$$\psi^+: MC(f_2^+)\to CF^+(Y_n(K)).$$
\end{theorem}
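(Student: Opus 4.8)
The plan is to establish the statement at the chain level on a single Heegaard multi-diagram, deducing the long exact sequence and the quasi-isomorphisms $\phi^+,\psi^+$ from the formal behavior of holomorphic polygon maps, in the spirit of Ozsv\'ath and Szab\'o's surgery exact triangle. The point is that $\mu$ (the meridian, giving $Y$) lies at distance one from every slope $n+j$ on $\partial(Y\setminus K)$, whereas $n$ and $n+m$ lie at distance $m$ from each other; inserting the intermediate slopes $n,n+1,\dots,n+m$ one obtains a chain of elementary surgery triangles relating $Y_{n+j}(K)$, $Y_{n+j+1}(K)$, and $Y$ for $j=0,\dots,m-1$, and the $m$ copies of $CF^+(Y)$ will come from assembling these $m$ triangles. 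Concretely, I would fix a doubly based Heegaard diagram for $(Y,K)$ in which $K$ is supported near one attaching curve, introduce a winding region carrying surgery curves $\gamma^{(n)},\dots,\gamma^{(n+m)}$ (with consecutive ones meeting once) together with a curve $\mu$ isotopic to the meridian meeting each $\gamma^{(k)}$ once, and realize all the elementary triangles simultaneously on this one diagram --- which is needed so that the maps $CF^+(Y_{n+j}(K))\to CF^+(Y_{n+j+1}(K))$ literally compose.

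On this multi-diagram I would define the chain maps $f_1^+ = $ the composite $CF^+(Y_n(K))\to CF^+(Y_{n+1}(K))\to\cdots\to CF^+(Y_{n+m}(K))$ and then $f_2^+$, $f_3^+$ by counting Maslov-index-zero holomorphic triangles, and produce null-homotopies for the composites $f_2^+f_1^+$ and $f_3^+f_2^+$ by counting holomorphic quadrilaterals; the standard degeneration argument identifies each such count with a count of holomorphic disks, i.e.\ a boundary term, so the composites vanish. Keeping the basepoint $w$ away from the isotopies in the winding region forces $\Z[U]$-equivariance throughout. The $\widehat{HF}$ statement is obtained in exact parallel, either from the $z$-adapted diagram or by restricting the $HF^+$ maps to $\ker U$ after checking they respect it.

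The heart of the matter is exactness, i.e.\ the quasi-isomorphisms $\phi^+:CF^+(Y_n(K))\to MC(f_2^+)$ and $\psi^+:MC(f_2^+)\to CF^+(Y_n(K))$. I would invoke the algebraic criterion underlying the surgery exact triangle (a sequence of chain maps with consecutive composites null-homotopic, together with a chain homotopy exhibiting a certain associated higher polygon-counting map as a quasi-isomorphism, yields the long exact sequence and explicit maps to and from the mapping cone), which reduces everything to verifying that quasi-isomorphism. Equivalently, in the iterated picture, one runs the octahedral axiom $m-1$ times to show that $\Cone(f_1^+)$ carries a length-$m$ filtration whose subquotients are the $m$ cones $\Cone\bigl(CF^+(Y_{n+j}(K))\to CF^+(Y_{n+j+1}(K))\bigr)\simeq CF^+(Y)$, and then shows that the connecting maps $CF^+(Y)\to CF^+(Y)[1]$ of this filtration are null-homotopic, so that $\Cone(f_1^+)\simeq\bigoplus^m CF^+(Y)$; the long exact sequence and $\phi^+,\psi^+$ then drop out of the resulting exact triangle. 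Verifying the quasi-isomorphism / the vanishing of the connecting maps is done by stretching the neck along the winding region and destabilizing, so that every polygon count in question is reduced to a model computation on an explicit planar or genus-one diagram and can be evaluated by hand.

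I expect this last step to be the main obstacle: one must choose the Heegaard multi-diagram so that, simultaneously for every polygon number feeding into the quasi-isomorphism and into the higher coherences among the null-homotopies, the winding-region counts are all tractable, and then control those counts in the $HF^+$ setting, where admissibility of the multi-diagram and the infinite $U$-towers require noticeably more care than in the hat version. A secondary technical nuisance, prior even to stating the octahedral bookkeeping, is arranging the $m$ elementary triangles on one common diagram so that the relevant maps are composable on the nose.
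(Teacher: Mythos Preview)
The paper does not prove this theorem; it is quoted verbatim from \cite{oz-sz-integer} (Theorem~3.1 there) and used as a black box in Section~3. There is therefore no proof in the present paper to compare your proposal against.

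For what it is worth, your sketch is in the right spirit but differs in organization from the original Ozsv\'ath--Szab\'o argument. In \cite{oz-sz-integer} the map $f_1^+$ is \emph{not} defined as a composite of $m$ elementary triangle maps through the intermediate slopes $n+1,\dots,n+m-1$; instead one works directly with a single Heegaard quadruple carrying curves for the slopes $n$, $n+m$, and $\infty$ (the meridian), and the $m$ copies of $CF^+(Y)$ arise from the $m$ distinct top-degree generators of the auxiliary torus complex (equivalently, from the $m$ intersection points of the $n$-curve with the $(n+m)$-curve in the winding region). The long exact sequence and the quasi-isomorphisms $\phi^+,\psi^+$ are then obtained in one stroke from the general homological-algebra lemma governing such triangles, with the required model computation carried out on a genus-one piece. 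Your iterated/octahedral route could plausibly be pushed through, but it introduces an extra layer of coherence data --- the vanishing of $m-1$ connecting maps and their mutual compatibility --- that the direct approach sidesteps entirely.
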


Knot Floer homology associates to a knot $K$ in a closed, oriented three-manifold $Y$ a $\Z\oplus\Z$-filtered $\Z[U]$-complex $C=CFK^\infty(Y,K)$ with differential $\partial$. We will use the knot Floer chain complex to construct chain complexes $\X^+(n)$ and $\X^+_{p/q}$ that we will show in sections 3 and 4 are quasi-isomorphic to the plus versions of the Heegaard Floer chain complexes for integral $n$-surgery or rational $p/q$-surgery on the knot $K$ in $Y$, respectively. We will also make use of the $CF^\delta$ variant of Heegaard Floer homology defined in \cite{oz-sz-integer}. For any nonnegative integer $\delta$, denote by $CF^\delta(Y)$ the subcomplex of $CF^+(Y)$ which is the kernel of multiplication by $U^{\delta+1}$. Similarly, there are $\delta$ versions of the other constructions in this section defined analogously to the plus versions, and the results for the plus version also hold for the $\delta$ version. The hat version, denoted $\wh{CFK}(Y,K)$ and similarly, is the $\delta$ version for $\delta=0$. 

Write the two factors of the $\Z\oplus\Z$ filtration as $(i,j)$. There is a $U$-action on $C$ that is an isomorphism of bifiltration $(-1,-1)$.

We will often use the following fact, for example see \cite[Reduction Lemma]{hedden-watson-geo-botony}.

\begin{lemma}\label{lem:reduction}
With coefficients in any field $\F$, the knot Floer complex $(C,\partial)$ for any knot $K\subset Y$ is $(\Z\oplus\Z)$-filtered chain homotopy equivalent to a chain complex $(\overline C,\overline\partial)$ such that the restriction of the differential $\overline\partial$ to any $\overline C\{i,j\}$ is $0$ and $\overline C\{0,j\}$ is isomorphic to $\wh{HFK}(K,j)$.
\end{lemma}

Knot Floer homology is known to detect the genus of the knot, and this is true for knots in general three-manifolds.

\begin{theorem}\cite[Theorem 2.4]{ni-thurston-norm-cosmetic}\label{thm:hfk-detects-genus}
For any field $\F$, if $K\subset Y$ is a knot of genus $g$, then $$\wh{HFK}(Y,K,s;\F)=0$$ if $|s|>g$. Furthermore, $\wh{HFK}(Y,K,g;\F)\ne 0$.
\end{theorem}

Define $A_s^+=C\{\max(i,j-s)\ge 0\}$ and $B^+=C\{i\ge 0\}$; these are chain complexes because the differential decreases both filtrations. There are two canonical chain maps $v_s^+:A_s^+\to B^+$ and $h_s^+:A_s^+\to B^+$, where $v_s^+$ is the vertical projection, and $h_s^+$ first projects $A_s^+$ to $C\{j\ge s\}$, then maps to $C\{j\ge 0\}$ via $U^s$, and finally maps to $B^+$ via a chain homotopy equivalence $C\{j\ge 0\}\to C\{i\ge 0\}$ \cite{oz-sz-integer}.

For all $s\in\Z$, there is a chain homotopy equivalence $\phi$ between $A_s^+$ and $A_{-s}^+$: first, the $U^s$ map is an isomorphism
$$A_s^+=C\{\max(i,j-s)\ge0\}\to C\{\max(i,j-s)\ge-s\}, $$ then the change of basepoint map that swaps the roles of $w$ and $z$ in the doubly-pointed Heegaard diagram is a chain homotopy equivalence
$$C\{\max(i,j-s)\ge-s\}\to C\{\max(i,j+s)\ge0\}=A_{-s}^+.$$ Let $\phi$ be the composition of these two maps. It is clear from the definitions that the following lemma holds: 

\begin{lemma}\label{lem:v-h-equiv}
For all $s\in\Z$, $v_s^+$ and $h_s^+$ are chain homotopy equivalent to $h_{-s}^+\circ\phi$ and $v_{-s}^+\circ\phi$, respectively.
\end{lemma}

Since all maps involved are $U$-equivariant, the above lemma also holds for the $CF^\delta$ version of knot Floer homology for any $\delta\ge 0$.

\begin{lemma}\label{lem:genus-finiteness}
Let $g$ be the genus of $K$, then for all $\delta\ge 0$ and $s\ge g+\delta$, $v_s^\delta$, and $h_{-s}^\delta$ induce isomorphisms on homology, and $v_{-s-1}^\delta$ and $h_{s+1}^\delta$ are nullhomotopic.
\end{lemma}
\begin{proof}
It is shown in the proof of the adjunction inequality in \cite[Theorem 5.1]{oz-sz-hfk} that one can construct a special Heegaard diagram for $(Y,K)$ such that all generators of $\wh{CFK}(Y,K)$ lie in Alexander grading greater than or equal to $-g$. Thus, all generators for $A_s^\delta$ with $i\ge 0$ must have $j\ge -g$, from which it follows that $h_{-s}^\delta$ induces an isomorphism on homology and $v_{-s-1}^\delta$ is null-homotopic. By the $CF^\delta$ version of Lemma~\ref{lem:v-h-equiv}, we obtain the statements about $v_s^\delta$ and $h_{s+1}^\delta$.
\end{proof}

Let $K\subset Y$ be null-homologous. Let $F$ be a Seifert surface for $K$ and let $\hat F$ be the closed surface obtained by capping off $\partial F$ with a disk. Fix a $\Spin^c$ structure $\frak t$ over $Y$, which need not have torsion first Chern class, and let $n\in\Z$. The space of $\Spin^c$ structures over $Y_n(K)$ which are $\Spin^c$-cobordant to $\frak t$ over the natural two-handle cobordism $W_n'(K):Y_n(K)\to Y$ is identified with $\Z/n\Z$ by $\frak u\mapsto i$, where $\frak s$ is an extension of $\frak u$ over $W_n'(K)$ with the property that
$$\langle c_1(\frak s),[\hat F]\rangle -n\equiv 2i \pmod{2n}.$$
For $i\in\Z/n\Z$, let $CF^+(Y_n(K),i,\frak t)$ denote the corresponding summand of $CF^+(Y_n(K))$. Let $A_{s,\frak t}^+,B_{\frak t}^+$ denote the corresponding chain complexes associated to the knot filtration on $CF^+(Y,\frak t)$ \cite{oz-sz-integer}.

Let $$\A_{i,\frak t}^+=\bigoplus_{s\in\Z\vert s\equiv i\pmod n} A_{s,\frak t}^+ \hspace{5mm}\text{and}\hspace{5mm} \B_{\frak t}^+=\bigoplus_{s\in\Z\vert s\equiv i\pmod n} B_{\frak t}^+$$ and let $D_{n,i,\frak t}^+:\A_{i,\frak t}^+\to\B_{i,\frak t}^+$ be the map
$$D_{n,i,\frak t}^+(\{a_s\})=\{b_s\},$$ where here
$$b_s=h_{s-n}^+(a_{s-n})+v_s^+(a_s).$$ 
Let $\X_{i,\frak t}^+(n)$ denote the mapping cone of $D_{n,i,\frak t}^+$; we prove in the next section that this chain complex computes $CF^+(Y_n(K),i,\frak t)$ \cite{oz-sz-integer}.

Taking the direct sum of the above chain complexes over all $\frak t\in\Spin^c(Y)$ and $i\in\Z/n\Z$ gives $\A^+$, $\B^+$, and the map $D_n^+:\A^+\to\B^+$. Let $\X^+(n)$ denote the mapping cone of $D_n^+$.

Let $K\subset Y$ be a null-homologous knot and let $F$ be a Seifert surface for $K$ in the homology class $\phi$. Given $n>0$ and $i\in\Z/n\Z$, let $\wh F_n\subset W_n'(K)$ be the closed surface obtained by capping off $\partial F$ with a disk. Let $\wh \phi_n=[\wh F_n]\in H_2(W_n'(K))$. Let $k$ be an integer satisfying $k\equiv i\pmod n$ and $|k|\le n/2$. Let $\frak x_k,\frak y_k\in\Spin^c(W_n'(K))$ satisfy $\frak x_k|Y=\frak y_k|Y=\frak t$ and
$$\langle c_1(\frak x_k),\hat{\phi}_n\rangle = 2k-n, \hspace*{5mm} \langle c_1(\frak y_k),\hat \phi_n\rangle = 2k+n,$$ and let $\frak t_i=\frak x_k|Y_n(K)=\frak y_k|Y_n(K)$. 

\begin{theorem}[\cite{oz-sz-integer} Theorem 2.3]\label{thm:oz-sz-2.3}
When $n\ge 2g(F)$, the chain complex $CF^+(Y_n(K),\frak t_i)$ is represented by the chain complex $A_{k,\frak t}^+$, in the sense that there is an isomorphism 
$$\Psi_n^+:CF^+(Y_n(K),\frak t_i)\to A_{k,\frak t}^+.$$ Moreover, the squares in the diagrams of Figure~\ref{fig:v-h-commutative-diagrams} commute.

\begin{figure}[h]
\centering
\begin{subfigure}[b]{0.45\textwidth}
\centering
\begin{tikzcd}
{CF^+(Y_n(K),\frak t_i)} \arrow[d, "\Psi_n^+"] \arrow[rr, "{f_{W_n'(K),\frak x_k}^+}"]  & {} & {CF^+(Y,\frak t)} \arrow[d, "="] \\
A_{k,\frak t}^+ \arrow[rr, "v_k^+"]                                                               & {} & B_{\frak t}^+                             
\end{tikzcd}
\end{subfigure}
\hfill
\begin{subfigure}[b]{0.45\textwidth}
\centering
\begin{tikzcd}
{CF^+(Y_n(K),\frak t_i)} \arrow[d, "\Psi_n^+"] \arrow[rr, "{f_{W_n'(K),\frak y_k}^+}"]  & {} & {CF^+(Y,\frak t)} \arrow[d, "="] \\
A_{k,\frak t}^+ \arrow[rr, "h_k^+"]                                                               & {} & B_{\frak t}^+.                             
\end{tikzcd}
\end{subfigure}
\caption{Commutative diagrams of the maps $v_k^+$ and $h_k^+$}
\label{fig:v-h-commutative-diagrams}

\end{figure}

\end{theorem}

The $CF^\delta$ version of the theorem holds with the same proof.

In the case of rational surgeries, let $p,q$ be a pair of relatively prime integers. For $\frak t\in\Spin^c(Y)$ and $i\in\Z/p\Z$, denote
$$\A_{p/q,i,\frak t}^+=\bigoplus_{r\in\Z} (r,A_{\lfloor(i+pr)/q\rfloor,\frak t}^+(K)) \hspace{5mm}\text{and}\hspace{5mm} \B_{p/q,i,\frak t}^+=\bigoplus_{r\in\Z} (r,B_{\frak t}^+).$$ Let $D_{p/q,i,\frak t}^+:\A_{p/q,i,\frak t}^+\to\B_{p/q,i,\frak t}^+$ be the map
$$D_{p/q,i,\frak t}^+\{(r,a_r)\}=\{(r,b_r)\},$$ where
$$b_r=v_{\lfloor(i+pr)/q\rfloor}^+(a_r)+h_{\lfloor(i+p(r-1))/q\rfloor}^+(a_{r-1}).$$
Let $\X_{p/q,i,\frak t}^+$ denote the mapping cone of $D_{p/q,i,\frak t}^+$; in section 4, we prove that the homology of this chain complex is isomorphic to $HF^+(Y_{p/q}(K),i,\frak t)$ \cite{oz-sz-rational}. 

As before, direct summing over all $\frak t\in\Spin^c(Y)$ and $i\in\Z/p\Z$ gives $\A_{p/q}^+$, $\B_{p/q}^+$, and $D_{p/q}^+:\A_{p/q}^+\to\B_{p/q}^+$. Let $\X_{p/q}^+$ denote the mapping cone of $D_{p/q}^+$. In the case where $p/q$ is clear from the context, we will drop the subscript $p/q$ from the notation.

\section{Integer Surgeries}

In this section, we provide a proof of the mapping cone formula for integer surgeries in the case where the $\Spin^c$ structure has nontorsion first Chern class. The formula for torsion $\Spin^c$ structures was originally proven by Ozsv\'ath and Szab\'o \cite{oz-sz-rational}. Our proof combines ingredients from their argument as well as Ni's proof of the zero surgery formula.

Let $K\subset Y$ be a null-homologous knot. Fix a $\Spin^c$ structure $\frak t$ over $Y$ with nontorsion first Chern class. 

Fix $n>0$. The long exact sequence in Theorem~\ref{thm:oz-sz-les} decomposes over $\Spin^c$ structures on $Y$. Setting $m=nk$ for large $k>0$, it also decomposes over $\Spin^c$ structures on $Y_n(K)$, i.e. given $i\in\Z/n\Z$, $CF^+(Y_n(K),i,\frak t)$ is quasi-isomorphic to $MC(f_{2,i,\frak t}^+)$ where
\begin{align}\label{eq:f2}
f_{2,i,\frak t}^+=\sum_{\frak s_s\in\mathcal X_i}f_{W_{n(k+1)}'(K),\frak s_s}^+:CF^+(Y_{n(k+1)}(K), [i], \frak t)\to CF^+(Y,\frak t),
\end{align}
\begin{align*}
\mathcal X_i=\{\frak s_s\in\Spin^c(W'_{n(k+1)}(K))\ &\vert\ \frak s_s\vert_Y=\frak t, \\
&\langle c_1(\frak s_s),[\widehat{F}]\rangle-n(k+1)\equiv 2s\pmod{2n(k+1)}, \\
&\text{and }s\equiv i\pmod n\},
\end{align*}
$$CF^+(Y_{n(k+1)}(K), [i], \frak t)=\bigoplus_{s\in\Z/n(k+1)\Z\ \vert\ s\equiv i\pmod n}CF^+(Y_{n(k+1)}(K), s, \frak t).$$

A strengthening of Proposition 3.5 in \cite{ni-property-g} shows the following:

\begin{proposition}[\cite{ni-property-g} Proposition 3.5]\label{thm:ni-3.5}
Let $n>0,$ $-n/2\le k< n/2$, and $p=\max\{k+g(F),-2k+g(F)\}$. Suppose that $n\ge\max\{p,2g(F)\}$, $\frak s\in\mathcal X_k\setminus\{\frak x_k,\frak y_k\},$ then the map $F_{W_n'(K),\frak s}^-$ factorizes through the map
$$U^{n-p}:\HF^-(Y_n(K),k,\frak t)\circlearrowright.$$
Moreover, if $c_1(\frak t)$ is nontorsion, then $F_{W_n'(K),\frak s}^+$ factorizes through
$$U^{n-p}:\HF^+(Y_n(K),k,\frak t)\circlearrowright.$$
\end{proposition}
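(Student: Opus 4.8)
The plan is to estimate the cobordism maps $F_{W_n'(K),\frak s}^{\pm}$ by an adjunction-type argument applied to the capped-off Seifert surface in $W_n'(K)\colon Y_n(K)\to Y$, extending to arbitrary framing the argument Ni gives for zero surgery. The $\Spin^c$ structures on $W_n'(K)$ restricting to $\frak t$ on $Y$ are parametrized by $j\in\Z$ via $\langle c_1(\frak s_j),[\widehat F_n]\rangle=2j-n$, with $\frak x_k\leftrightarrow j=k$ and $\frak y_k\leftrightarrow j=k+n$; equivalently they are the structures $\frak x_k+m\operatorname{PD}[\widehat F_n]$, $m\in\Z$, with $m=0$ and $m=-1$ recovering $\frak x_k$ and $\frak y_k$. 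Thus for $\frak s\in\mathcal X_k\setminus\{\frak x_k,\frak y_k\}$ we have $j\le k-n$ or $j\ge k+2n$, and since $\lvert k\rvert\le n/2$ the evaluation obeys $\lvert 2j-n\rvert\ge 2n$. By Theorem~\ref{thm:oz-sz-2.3} we may work in the fixed chain model $A_k^+$ for $CF^+(Y_n(K),\frak s_k,\frak t)$, in which $F_{W_n'(K),\frak x_k}^+$ and $F_{W_n'(K),\frak y_k}^+$ become $v_k^+$ and $h_k^+$, and the task is to show that every other $F_{W_n'(K),\frak s}^{\pm}$ is, on homology, divisible by $U^{n-p}$.

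The core is the $HF^-$ statement. The new difficulty relative to Ni's $n=0$ case is that the capped surface $\widehat F_n\subset W_n'(K)$, of genus $g=g(F)$, now has negative self-intersection $-n$, so the plain adjunction inequality is vacuous; one must bring in the device that makes the generalized adjunction inequality effective for a surface of negative self-intersection — for instance, passing to the orientation-reversed cobordism, whose intersection form is positive definite and which contains $\widehat F_n$ with self-intersection $+n$, and transferring the conclusion by duality, or an equivalent blow-up argument. Applied to $F_{W_n'(K),\frak s_j}^{-}$, this should show divisibility by a power of $U$ that grows linearly in $\lvert 2j-n\rvert-n$; combined with the $U$-divisibility already carried by $v_k^+$ and $h_k^+$ when $\lvert k\rvert\le n/2$, and analyzed separately on the range $j\le k-n$ (approached from $\frak x_k$) and the range $j\ge k+2n$ (approached from $\frak y_k$), this produces the asymmetric quantity $p=\max\{k+g,-2k+g\}$ and the bound $n-p$ (which need not be sharp — only large when $n$ is). As a consistency check, the grading shift of $F_{W_n'(K),\frak s_j}^{-}$ equals $\tfrac{c_1(\frak s_j)^2+1}{4}=-\tfrac{(2j-n)^2}{4n}+O(1)$, which tends to $-\infty$; this alone does not yield the sharp exponent (for torsion $\frak t$ the target of $F^{-}$ is unbounded below in grading), but it is consistent with the claim and shows the maps eventually vanish.

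The $HF^+$ statement in the nontorsion case should follow formally: when $c_1(\frak t)$ is nontorsion one has $HF^\infty(Y,\frak t)=0$ and $HF^\infty(Y_n(K),k,\frak t)=0$, so the connecting homomorphism of the exact triangle $HF^-\to HF^\infty\to HF^+$ is a $\Z[U]$-equivariant isomorphism $HF^+(Y,\frak t)\cong HF^-(Y,\frak t)$ up to a grading shift, and similarly for $Y_n(K)$; these isomorphisms intertwine $F_{W_n'(K),\frak s}^{+}$ and $F_{W_n'(K),\frak s}^{-}$, so divisibility by $U^{n-p}$ transfers. The step I expect to be the main obstacle is the one in the middle paragraph: making the negative-self-intersection form of the adjunction argument (via the reversed cobordism and duality, or via blow-ups) yield precisely the exponent $n-p$ rather than a weaker linear-in-$n$ estimate, and carrying out the two-case bookkeeping — in particular pinning down exactly how much $U$-divisibility $v_k^+$ and $h_k^+$ contribute for $\lvert k\rvert\le n/2$ — so that the bound comes out with the stated $p$.
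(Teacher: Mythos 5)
Your setup (the parametrization of $\mathcal X_k$ by $\frak x_k+m\PD[\widehat F_n]$ and the identification of the two special structures) matches the paper, but the heart of the statement --- that for every non-special $\frak s$ the map $F_{W_n'(K),\frak s}^-$ factors through $U^{n-p}$ --- is exactly the step you leave as ``the main obstacle,'' so the proposal has a genuine gap rather than a proof. Moreover, the mechanism you gesture at is not the one that delivers the exponent. The paper (following Ni's proof of his Proposition 3.5) does not run a four-dimensional adjunction argument on $\widehat F_n$ at all: it uses the large-surgery identification of Theorem~\ref{thm:oz-sz-2.3}, under which the cobordism maps for \emph{all} $\Spin^c$ structures in $\mathcal X_k$ are described through the knot filtration, and the map for $\frak x_k+l\PD[\widehat F_n]$ factors through $U$ to the power $\tfrac{l(l+1)n}{2}-lk-g$; the whole content of the improvement is the elementary observation that this quadratic quantity is minimized over $l\ne 0,-1$ at $l=1$ or $l=-2$, giving exactly $n-\max\{k+g,-2k+g\}=n-p$. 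Your proposed route via the orientation-reversed cobordism and duality is unsubstantiated: duality relates $F^+_{W,\frak s}$ to a map of the reversed cobordism, and no generalized adjunction statement you can quote gives $U$-divisibility of $F^-_{W_n'(K),\frak s}$ with the quadratic-in-$l$ exponent needed here (your own phrasing ``grows linearly in $|2j-n|-n$'' would at best reproduce the binding cases and would require exactly the bookkeeping you defer). Note also that your ``consistency check'' overreaches: for torsion $\frak t$ a large negative grading shift does \emph{not} force $F^-$ to vanish, since the target $\HF^-(Y,\frak t)$ is nonzero in arbitrarily negative gradings --- and for nontorsion $\frak t$, which is the case of interest, there is no absolute $\Q$-grading to shift, which is precisely why a filtration-level argument is needed.

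The transfer to $\HF^+$ also leans on an assertion you cannot simply quote: you claim $\HF^\infty(Y,\frak t)=0$ whenever $c_1(\frak t)$ is nontorsion and deduce an isomorphism $\HF^+\cong\HF^-$ intertwining the cobordism maps. The citable facts are weaker: for nontorsion $\frak t$ the group $\HF^+(Y,\frak t)$ is finitely generated and every element is $U$-torsion, so $U^N\cdot\HF^+=0$ for large $N$ and hence the image of $\HF^\infty\to\HF^+$ vanishes; this gives injectivity of the connecting map $\HF^+\to\HF^-$, not an isomorphism. Your argument can be repaired using only this injectivity together with naturality of the long exact sequence under $F_{W_n'(K),\frak s}^\circ$ (if $F^-$ factors through $U^{n-p}$ then $F^+$ kills $\ker U^{n-p}$, which is the form in which the statement is used later), but as written the claimed vanishing of $\HF^\infty$ is an unjustified strengthening. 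In short: the parametrization and the shape of the asymmetry in $p$ are right, but the key divisibility estimate is not proved and the adjunction-by-duality strategy is not an adequate substitute for the filtration/large-surgery argument the paper invokes.
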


This proposition is exactly the same as \cite{ni-property-g} Proposition 3.5 when $k\le 0$, but when $k>0$, we improve the bound on the power of $U$ from $n-2k-g(F)$ to $n-p=n-k-g(F)$. The proof of the improved bound is just checking that when $k>0$, the inequality
$$\frac{l(l+1)n}{2}-lk-g\ge n-2|k|-g$$ in Ni's proof can be improved to
$$\frac{l(l+1)n}{2}-lk-g\ge n-k-g.$$

By the same proof, the proposition also holds for the $\delta$ version of Heegaard Floer homology for any $\delta\ge 0$.

\begin{theorem}\label{thm:mc-integer}
For each $i\in\Z/n\Z$, the mapping cone $\X_{i,\frak t}^+(n)$ of 
$$D_{n,i,\frak t}^+:\A_{i,\frak t}^+\to\B_{i,\frak t}^+$$ is quasi-isomorphic to $CF^+(Y_n(K),i,\frak t)$ as $\Z[U]$-modules.
\end{theorem}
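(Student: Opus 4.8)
The plan is to follow the strategy used by Ozsváth--Szabó for the torsion case and by Ni for zero surgery, but now tracking the nontorsion $\Spin^c$ structure $\frak t$ throughout. The starting point is the identification of $CF^+(Y_n(K), i, \frak t)$ with $MC(f_{2,i,\frak t}^+)$ coming from Theorem~\ref{thm:oz-sz-les} with $m = nk$ for $k$ large, as recorded in \eqref{eq:f2}. So it suffices to produce a quasi-isomorphism between $MC(f_{2,i,\frak t}^+)$ and the mapping cone $\X_{i,\frak t}^+(n)$ of $D_{n,i,\frak t}^+$. The source of $f_{2,i,\frak t}^+$ is $CF^+(Y_{n(k+1)}(K), [i], \frak t)$, which splits as a sum of $CF^+(Y_{n(k+1)}(K), s, \frak t)$ over $s \equiv i \bmod n$; since $n(k+1) \ge 2g(F)$ for $k$ large, each such summand is identified by Theorem~\ref{thm:oz-sz-2.3} with some $A_{s',\frak t}^+$ where $s'$ is the representative of $s$ in $(-n(k+1)/2, n(k+1)/2]$. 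This gives an identification of the source of $f_{2,i,\frak t}^+$ with a finite truncation of $\A_{i,\frak t}^+ = \prod_{s \equiv i} A_{s,\frak t}^+$, and the target is $CF^+(Y, \frak t) = B_{\frak t}^+$, which we want to compare with a single factor (or a truncation) of $\B_{i,\frak t}^+$.

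The key computation is to show that under the identifications of Theorem~\ref{thm:oz-sz-2.3}, the cobordism map $f_{2,i,\frak t}^+$ decomposes (after projecting to the relevant $B^+$ factors) as the sum of the $v_k^+$ and $h_k^+$ maps, exactly as in the definition of $D_{n,i,\frak t}^+$, plus terms that are negligible. Concretely, $f_{2,i,\frak t}^+$ is a sum $\sum_{\frak s_s \in \mathcal X_i} f_{W_{n(k+1)}'(K), \frak s_s}^+$; for each summand $CF^+(Y_{n(k+1)}(K), s, \frak t)$, the two $\Spin^c$ structures $\frak x_{k}$ and $\frak y_k$ (in the notation preceding Theorem~\ref{thm:oz-sz-2.3}, with the surgery coefficient $n(k+1)$) restricting to the right $\Spin^c$ structure on $Y_{n(k+1)}(K)$ contribute precisely $v^+$ and $h^+$ maps by the commuting squares of Theorem~\ref{thm:oz-sz-2.3}, while all the other $\frak s_s \in \mathcal X_i$ contribute maps that, by Proposition~\ref{thm:ni-3.5} (applied with surgery coefficient $n(k+1)$, where $n(k+1) - p \to \infty$ as $k \to \infty$), factor through arbitrarily high powers of $U$. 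Because $CF^+(Y, \frak t)$ is the direct limit of finitely generated pieces and the nontorsion hypothesis makes the $U$-action on $HF^+(Y, \frak t)$ eventually behave well, a sufficiently high power of $U$ kills any fixed element; this is where the nontorsion of $c_1(\frak t)$ is essential and where Ni's argument enters. So for $k$ large enough these extra terms do not affect the chain homotopy type, and $f_{2,i,\frak t}^+$ is identified with the truncated version of $D_{n,i,\frak t}^+$.

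The last step is to pass from the truncated mapping cone to the full one. Following Ozsváth--Szabó, one shows that as $k \to \infty$ the inclusion of the truncated complex into $\X_{i,\frak t}^+(n)$ is a quasi-isomorphism: the point is that for $|s|$ large, $v_s^+$ is an isomorphism (for $s \gg 0$) and $h_s^+$ is an isomorphism (for $s \ll 0$), so the tails of $\A_{i,\frak t}^+ \xrightarrow{D_{n,i,\frak t}^+} \B_{i,\frak t}^+$ are acyclic and can be cancelled; the direct/inverse limit arguments of \cite{oz-sz-integer} then give that the truncations compute the same homology as the full mapping cone, compatibly. Combining the three identifications $CF^+(Y_n(K), i, \frak t) \simeq MC(f_{2,i,\frak t}^+) \simeq \text{(truncated cone)} \simeq \X_{i,\frak t}^+(n)$, all $\Z[U]$-equivariantly, finishes the proof.

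I expect the main obstacle to be the second step: verifying carefully that the ``extra'' cobordism maps from $\frak s_s \in \mathcal X_i \setminus \{\frak x_k, \frak y_k\}$ are genuinely negligible at the chain level (not just on homology), and that the bookkeeping of $\Spin^c$ structures on the cobordism $W_{n(k+1)}'(K)$ versus the index $s$ and the floor-function shifts is consistent — in particular, making sure that the $U^{n(k+1)-p}$-factorization from Proposition~\ref{thm:ni-3.5} interacts correctly with the direct limit defining $CF^+$ so that, after truncating, these terms can be homotoped away uniformly. The nontorsion hypothesis is used precisely to guarantee this $U$-power vanishing, paralleling Ni's proof of the zero-surgery case.
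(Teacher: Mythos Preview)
Your overall strategy matches the paper's, but there is a genuine gap in the key step. You claim that Proposition~\ref{thm:ni-3.5}, applied with surgery coefficient $n(k+1)$, gives a factorization through $U^{n(k+1)-p}$ with $n(k+1)-p\to\infty$; this is only true when the $\Spin^c$ index $s$ is nonnegative. In the proposition's notation $p=\max\{s+g,-2s+g\}$, and as $s$ ranges over $-n(k+1)/2\le s<n(k+1)/2$ the quantity $p$ can be as large as $n(k+1)+g$ (namely when $s$ is near $-n(k+1)/2$), so the hypothesis $n(k+1)\ge p$ fails outright and the proposition yields nothing. The paper treats the two halves separately: for $s\ge 0$ one has $p=s+g\le n(k+1)/2+g$, and the argument you sketched goes through; for $s<0$ the paper instead invokes conjugation invariance (Theorem~\ref{thm:conjugation-invariance}), observing that if $\frak s_s=\frak x_s+r\PD[\wh F]$ then $\overline{\frak s_s}=\overline{\frak x}_{-s}-(r+1)\PD[\wh F]$ has index $-s>0$ and is still not one of the two special $\Spin^c$ structures, so the already-established vanishing for nonnegative index applies to $F^+_{W,\overline{\frak s_s}}$ and hence to $F^+_{W,\frak s_s}$. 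You never mention conjugation, and without it the negative-$s$ summands are uncontrolled.

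A secondary point: the $U$-power vanishing is used on the \emph{source}, not the target. Since $c_1(\frak t)$ is nontorsion, each $H_*(A^+_{s,\frak t})$ is a finitely generated abelian group, so there is a constant $C_1$ independent of $n,k$ with $U^{C_1}\vert H_*(A^+_{s,\frak t})=0$; via Theorem~\ref{thm:oz-sz-2.3} this gives $U^{C_1}\vert HF^+(Y_{n(k+1)}(K),s,\frak t)=0$, and combined with the factorization of the extra cobordism maps through $U^{n(k+1)-p}$ on the source, those maps vanish on homology once $n(k+1)-p>C_1$. Your phrasing in terms of $HF^+(Y,\frak t)$ would require the factorization to be on the other side of the map, which is not what Proposition~\ref{thm:ni-3.5} provides.
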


\begin{proof}[Proof when $n>0$]
We first prove the theorem for the $CF^\delta$ version of knot Floer homology for any $\delta\ge0$, that is, the mapping cone $\X_{i,\frak t}^\delta(n)$ is quasi-isomorphic to $CF^\delta(Y_n(K),i,\frak t)$ as $\Z[U]$-modules. 

Recall that $CF^\delta(Y_n(K),i,\frak t)$ is quasi-isomorphic to $MC(f_{2,i,\frak t}^\delta)$ as in Equation~\ref{eq:f2}. Since $c_1(\frak t)$ is nontorsion, $H_*(A_i^+)$ is a finitely generated abelian group. So $U^q\vert H_*(A_i^+)=0$ when $q$ is greater than a constant $C_1$ independent of $n,k$. Hence, $U^q\vert H_*(A_i^\delta)=0$ for $q>C_1$ for the same constant $C_1$ independent of $n,k,$ and $\delta$. By the $CF^\delta$ version of Theorem~\ref{thm:oz-sz-2.3}, this implies that $U^q\vert HF^\delta(Y_{n(k+1)}(K),i,\frak t)=0$ when $q>C_1$ and $n(k+1)>2g$. Choose $k$ such that $n(k+1)>2(g+\delta)$.

Consider a $\Spin^c$ structure $\frak s_s\in\mathcal X_i$ for some $s\in\Z/n(k+1)\Z$. Recall that we choose representatives
$$-\frac{n(k+1)}{2}\le s<\frac{n(k+1)}{2}$$ for classes in $\Z/n(k+1)\Z$. Suppose $s\ge 0$. Then for $l=\max\{s+g,-2s+g\}$, we have 
$$l\le \frac{n(k+1)}{2}+g\le n(k+1).$$ Proposition~\ref{thm:ni-3.5} implies that $F_{W_{n(k+1)}'(K),\frak s_s}^\delta=0$ when $\frak s_s\in\mathcal X_i\setminus\{\frak x_s,\frak y_s\}$. 

Now for $s<0$, let $\frak s_s\in\mathcal X_i\setminus\{\frak x_s,\frak y_s\}$, then by conjugation invariance, there is a chain homotopy equivalence
$$F_{W_{n(k+1)}'(K),\frak s_s}^\delta\cong F_{W_{n(k+1)}'(K),\overline{\frak s_s}}^\delta.$$ If $\frak s_s=\frak x_s+r\PD[\widehat F]$, then 
$$\overline{\frak s_s}=\overline{\frak x}_{-s}-(r+1)\PD[\widehat F],$$ in particular, $\overline{\frak s_s}$ is not one of the two special $\Spin^c$ structures and $-s>0$. By the above argument, $F_{W_{n(k+1)}'(K),\overline{\frak s_s}}^\delta=0$, hence also $F_{W_{n(k+1)}'(K),\frak s_s}^\delta=0$.

Therefore, $MC(f_{2,i,\frak t}^\delta)$ is quasi-isomorphic to the mapping cone of$$\sum_{s\in\Z/n(k+1)\Z\ :\ s\equiv i \pmod n}\left(f_{W_{n(k+1)}'(K),\frak x_s}^\delta+f_{W_{n(k+1)}'(K),\frak y_s}^\delta\right) : CF^\delta(Y_{n(k+1)}(K),[i],\frak t)\to CF^\delta(Y,\frak t).$$ By Theorem~\ref{thm:oz-sz-2.3}, this chain complex is quasi-isomorphic to the mapping cone of
$$\sum_{s\in\Z/n(k+1)\Z\ :\ s\equiv i \pmod n} (v_s^\delta + h_s^\delta): \bigoplus_{s\in\Z/n(k+1)\Z\ :\ s\equiv i \pmod n} A_s^\delta\to \bigoplus_{s\in\Z/n(k+1)\Z\ :\ s\equiv i \pmod n} B^\delta.$$ We rewrite this as the truncated mapping cone
$$\sum_{-n(k+1)/2\le s<n(k+1)/2\ :\ s\equiv i \pmod n} (v_s^\delta + h_s^\delta).$$ We chose $k$ large enough such that $\frac{n(k+1)}{2}>g+\delta$, so by Lemma~\ref{lem:genus-finiteness}, the above mapping cone is quasi-isomorphic to the mapping cone of
$$\sum_{s\ :\ s\equiv i \pmod n} (v_s^\delta + h_s^\delta): \bigoplus_{s\ :\ s\equiv i \pmod n} A_s^\delta\to \bigoplus_{s\ :\ s\equiv i \pmod n} B^\delta,$$ which by definition is $\X_{i,\frak t}^\delta(n)$.

Therefore, we have the quasi-isomorphism
$$CF^\delta(Y_n(K),i,\frak t)\cong \X_{i,\frak t}^\delta(n)$$ for all $\delta\ge 0$. Applying \cite[Lemma 2.7]{oz-sz-integer}, we conclude that
$$CF^+(Y_n(K),i,\frak t)\cong\X_{i,\frak t}^+(n).$$

\end{proof}

\begin{proof}[Proof of Theorem~\ref{thm:mc-integer} for negative surgery coefficients]

Continue to assume $n>0$, only we consider $Y_{-n}(K)$. Again, we first work with the $\delta$ version for $\delta\ge 0$. In this case, let $W'_{n(k-1)}(K)$ be the cobordism from $Y_{n(k-1)}(K)$ to $Y$. For each $\frak t\in\Spin^c(Y)$, $i\in\Z/n\Z$,
$$HF^\delta(Y_{-n}(K),i,\frak t)\cong H_*\left(MC\left(f_{2,i,\frak t}^\delta:CF^\delta(Y_{n(k-1)}(K),[i],\frak t)\to\bigoplus^{nk}CF^\delta(Y,\frak t)\right)\right),$$ where
$$
f_{2,i,\frak t}^\delta=\sum_{\frak s_s\in\mathcal X_i}f_{W_{n(k-1)}'(K),\frak s_s}^\delta:CF^\delta(Y_{n(k-1)}(K), [i], \frak t)\to CF^\delta(Y,\frak t),
$$
\begin{align*}
\mathcal X_i=\{\frak s_s\in\Spin^c(W'_{n(k-1)}(K))\ &\vert\ \frak s_s\vert_Y=\frak t,  \\
&\langle c_1(\frak s_s),[\widehat{F}]\rangle-n(k-1)\equiv 2s\pmod{2n(k-1)}, \\
&\text{and }s\equiv i\pmod n\},
\end{align*}
$$CF^\delta(Y_{n(k-1)}(K), [i], \frak t)=\bigoplus_{s\in\Z/n(k-1)\Z\ \vert\ s\equiv i\pmod n}CF^\delta(Y_{n(k-1)}(K), s, \frak t).$$

Consider the summand $CF^\delta(Y_{n(k-1)}(K),s,\frak t)$ for 
$$-\frac{n(k-1)}{2}\le s<\frac{n(k-1)}{2},\hspace*{5mm} s\equiv i\pmod n.$$ Then, as above, if $\frak s_s\in\mathcal X_i\setminus\{\frak x_s,\frak y_s\}$, then $F_{W'_{n(k-1)}(K),\frak s_s}^\delta=0$. The conclusion follows again by the same argument as in the positive surgery case.

\end{proof}

\section{Rational Surgeries}

In this section, we sketch the proof of the rational surgery formula:

\begin{theorem}\label{thm:rational-surgery-formula}
Let $K\subset Y$ be a null-homologous knot, and let $p,q$ be a pair of relatively prime integers. Then, for each $\frak t\in\Spin^c(Y)$, $i\in\Z/p\Z$, there is an isomorphism of $\Z[U]$-modules
$$H_*(\mathbb X_{p/q,i,\frak t}^+)\cong HF^+(Y_{p/q}(K),i,\frak t).$$
\end{theorem}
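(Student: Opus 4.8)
The plan is to reduce the rational surgery formula to the integer surgery formula just established, following the standard Ozsv\'ath--Szab\'o strategy of realizing $Y_{p/q}(K)$ as a large integer surgery on a suitably modified knot. Concretely, one forms the knot $K' \subset Y'$ obtained by adding an unknot component linking $K$ appropriately and performing surgery, so that $Y_{p/q}(K)$ arises as $N$-surgery on $K'$ for $N$ large; the knot Floer complex of $K'$ in the relevant $\Spin^c$ structures is then expressed in terms of that of $K$ via the filtered stable equivalence results of Ozsv\'ath--Szab\'o. Since $c_1(\frak t)$ may now be torsion or nontorsion, I would organize the argument so that the torsion case is quoted directly from \cite{oz-sz-rational}, and only the nontorsion case needs the new input from Section 3.

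First I would set up the truncation: because $H_*(A_{s,\frak t}^+)$ and $H_*(B_\frak t^+)$ are finitely generated when $c_1(\frak t)$ is nontorsion, and because for $|s|$ large the maps $v_s^+$ (resp.\ $h_s^+$) are isomorphisms, the mapping cone $\mathbb X_{p/q,i,\frak t}^+$ is quasi-isomorphic to a finite sub/quotient complex involving only finitely many of the $A$'s and $B$'s. This finite truncation is exactly the shape of the integer surgery mapping cone $\mathbb X_{i',\frak t}^+(n)$ for $n$ large, after the identification of the floors $\lfloor (i+pr)/q \rfloor$ with the integer-surgery indices. Second, I would invoke the integer surgery formula proven in Section 3 (Theorem in that section, both $n>0$ and $n<0$ cases) to identify the truncated integer mapping cone with $CF^+$ of the large surgery, and then use the fact that large surgery on $K'$ recovers $Y_{p/q}(K)$ in the matching $\Spin^c$ structure. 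Third, one checks the bookkeeping: that the $\Spin^c$ decomposition on $Y_{p/q}(K)$ indexed by $i \in \Z/p\Z$ matches the decomposition of the integer-surgery mapping cone under the identification, and that the $\Z[U]$-module structures agree. The $U$-equivariance is automatic from the $U$-equivariance of all maps involved.

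Since the excerpt says this section only \emph{sketches} the proof, I would keep the presentation at the level of: (a) state the reduction to large surgery on $K'$, citing \cite{oz-sz-rational} for the knot Floer computation of $K'$; (b) perform the truncation argument in the nontorsion case, pointing out that finite generation of homology (used crucially in Section 3) is what makes the truncation valid; (c) apply the Section 3 theorem; (d) note the torsion case is \cite{oz-sz-rational} verbatim. I would not reprove the filtered stable equivalence lemmas or recompute the $\Spin^c$ correspondences.

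The main obstacle will be step (c)'s bookkeeping in the nontorsion case, specifically matching the index shifts: the rational mapping cone uses indices $\lfloor (i+pr)/q \rfloor$ and $\lfloor (i+p(r-1))/q \rfloor$, and one must verify these are precisely the pattern of $v$ and $h$ maps that appears in the integer surgery mapping cone for $K'$ at large surgery coefficient, with the $\Spin^c$ label on $Y_{p/q}(K)$ correctly tracked through the homeomorphism $Y_{p/q}(K) \cong Y'_N(K')$. In the torsion case this is done in \cite{oz-sz-rational}; the content here is checking that nothing in that combinatorial identification used torsionness of $c_1(\frak t)$ — it did not — while the analytic input (vanishing of cobordism maps away from the two special $\Spin^c$ structures, giving the mapping cone shape) is supplied by Section 3 rather than by the torsion-case arguments.
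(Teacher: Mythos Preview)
Your proposal diverges from the paper's route and contains a structural gap. The paper does \emph{not} realize $Y_{p/q}(K)$ as large $N$-surgery on an auxiliary knot and then invoke Section~3. Instead, it uses the Kirby calculus identification $Y_{p/q}(K)\cong (Y\#(-L(q,r)))_a(K\# O_{q/r})$ with $a=\lfloor p/q\rfloor$ (a fixed, generally small integer), applies the Morse surgery formula for \emph{rationally} null-homologous knots quoted from \cite{oz-sz-rational}, and then uses the $U$-knot K\"unneth principle (that $O_{q/r}\subset -L(q,r)$ is a $U$-knot) together with a $\Spin^c$ bookkeeping lemma to rewrite the resulting mapping cone as $\X_{p/q,i,\frak t}^+$. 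The paper's contribution in the nontorsion case is simply the assertion that the entire Ozsv\'ath--Szab\'o argument goes through verbatim once gradings are dropped; Section~3 is a parallel, self-contained result rather than an input to Section~4.

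Your plan has two concrete problems. First, the surgery coefficient on the auxiliary knot is $a=\lfloor p/q\rfloor$, not a large $N$; there is no ``large surgery'' step here at which Theorem~\ref{thm:oz-sz-2.3} would collapse the mapping cone. Second, and more seriously, the knot $K\# O_{q/r}\subset Y\#(-L(q,r))$ is only \emph{rationally} null-homologous (since $O_{q/r}$ represents a generator of $H_1(-L(q,r))$), so the integer surgery theorem of Section~3, stated for null-homologous knots, does not apply. What is actually needed is the Morse surgery formula for rationally null-homologous knots, which the paper quotes rather than reproves. Your truncation remark is correct in spirit (and is used implicitly in checking the nontorsion case goes through), but the claim that the truncated rational cone ``is exactly the shape of the integer surgery mapping cone $\X_{i',\frak t}^+(n)$ for $n$ large'' is not right: the floor pattern $\lfloor (i+pr)/q\rfloor$ does not reorganize into a single integer-surgery cone for $K$, but rather matches the Morse cone for $K\# O_{q/r}$ after the K\"unneth identification.
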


Ozsv\'ath and Szab\'o stated and proved this theorem in the case where $\frak t$ has torsion first Chern class \cite{oz-sz-rational}. In this case, they showed this isomorphism also preserves $\Z$-grading. Their argument still holds when the $\Spin^c$ structure is nontorsion, as long as grading information is ignored.

Let $a=\lfloor p/q\rfloor$ and $p/q=a+r/q$. Consider the Hopf link in $S^3$ and perform $-q/r$ surgery on one component of the link. Let $O_{q/r}\subset -L(q,r)$ be the other component of the link viewed as a knot in the resulting lens space after surgery. By Kirby calculus, $Y_{p/q}(K)$ can be realized by surgery with coefficient $a$ inside the knot $K\#O_{q/r}\subset Y\#(-L(q,r))$ \cite{oz-sz-rational}. Thus, one obtains the rational surgery formula by proving a formula for Morse surgeries on rationally null-homologous knots and then applying a Kunneth principle for connected sums.

Analogously to the case of integer surgeries on null-homologous knots, for $K\subset Y$ a rationally null-homologous knot with an orientation (we use $\overline K$ to denote the oriented knot), $\lambda$ a framing on $K$, and $\frak s\in\Spin^c(Y_\lambda(K))$, define the chain map $D_{\frak s}^+:\A_{\frak s}^+(Y,\overline{K})\to\B_{\frak s}^+(Y,\overline{K})$ as in \cite{oz-sz-rational} Theorem 6.1. Let $\X_{\frak s}^+(Y,\overline{K},\lambda)$ be the mapping cone of $D_{\frak s}^+$ \cite{oz-sz-rational}.

\begin{theorem}[Morse surgery formula \cite{oz-sz-rational} Theorem 6.1]
Let $K\subset Y$ be a rationally null-homologous knot, and let $\lambda$ be a framing on $K$. Let $\frak s\in\Spin^c(Y_\lambda(K))$, then
$$HF^+(Y_\lambda(K),\frak s)\cong H_*(\X_{\frak s}^+(Y,\overline{K},\lambda))$$ as $\Z[U]$-modules. 
\end{theorem}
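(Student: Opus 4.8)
The torsion case of this statement is \cite{oz-sz-rational}; the plan is to follow their argument, modifying the one step that uses torsion of $\frak s$ in the manner of Section~3 (integer surgeries on null-homologous knots). The idea is to reconstruct $CF^+(Y_\lambda(K),\frak s)$ from iterated surgery exact triangles. Writing $\mu$ for a meridian of $K$, there is a surgery exact sequence relating $HF^+(Y_\lambda(K))$, $HF^+(Y_{\lambda+m\mu}(K))$, and $\bigoplus^m HF^+(Y)$ for every $m>0$, induced by $\Z[U]$-equivariant chain maps that fit into a mapping-cone quasi-isomorphism, just as in Theorem~\ref{thm:oz-sz-les}. For $m$ large the surgery $Y_{\lambda+m\mu}(K)$ is a large surgery, and the large surgery formula of \cite{oz-sz-rational} (the analog of Theorem~\ref{thm:oz-sz-2.3} for rationally null-homologous knots) identifies $CF^+(Y_{\lambda+m\mu}(K),\frak u)$, in each relevant $\Spin^c$ structure $\frak u$, with a complex $A_s^+(Y,\overline K)$ built from the knot Floer complex; under this identification the two-handle cobordism maps coming from the two extremal $\Spin^c$ structures on the cobordism become the projections $v_s^+$ and $h_s^+$.

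The next step is to decompose over $\Spin^c$ structures and telescope: taking $m$ to be a large multiple of the relevant period and splitting the long exact sequence along $\Spin^c$ structures on $Y_\lambda(K)$, one obtains that $CF^+(Y_\lambda(K),\frak s)$ is quasi-isomorphic to the mapping cone of $f_{2,\frak s}^+=\sum_{\frak v}f_{W,\frak v}^+$, the sum of the two-handle cobordism maps over the $\Spin^c$ structures $\frak v$ on the cobordism $W\colon Y_{\lambda+m\mu}(K)\to Y$ with the correct restrictions, analogously to Equation~\ref{eq:f2}. It then remains to show that only the two extremal $\Spin^c$ structures $\frak x_s,\frak y_s$ contribute. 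This is the one place where \cite{oz-sz-rational} invokes the torsion hypothesis, arguing there via adjunction inequalities and the tower structure of $HF^+$, and it is also the step where Section~3 departs from their proof: when $c_1(\frak s)$ is nontorsion, $HF^+(Y_{\lambda+m\mu}(K),\frak u)$ is finitely generated, so $U$ acts nilpotently on it, and the analog of Proposition~\ref{thm:ni-3.5} shows that each non-extremal cobordism map factors through a power $U^{m-p}$ of $U$ on this group, hence vanishes once $m$ is large. The conjugation-invariance argument of Section~3, via Theorem~\ref{thm:conjugation-invariance}, disposes of the $\Spin^c$ structures on the negative side. Matching the surviving maps with $D_{\frak s}^+$ through the large surgery identification then yields the $\Z[U]$-module isomorphism $HF^+(Y_\lambda(K),\frak s)\cong H_*(\X_{\frak s}^+(Y,\overline K,\lambda))$.

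The step I expect to be the main obstacle is the $\Spin^c$ bookkeeping. For a rationally null-homologous knot there is no canonical labeling of $\Spin^c(Y_\lambda(K))$, and the self-intersection of the capped-off surface $\widehat F$ in the two-handle cobordism depends on $\lambda$ and on the multiplicity of $K$, so one has to re-derive the correct integer constants in the factorization of Proposition~\ref{thm:ni-3.5} and confirm that the large-surgery range is genuinely reached for $m$ large in this more general setting; the same care is needed to make the large surgery formula of \cite{oz-sz-rational} applicable in nontorsion $\Spin^c$ structures. Once these constants are pinned down, the argument is formally identical to the null-homologous case treated in Section~3, and the remaining details amount to a routine but lengthy adaptation of \cite{oz-sz-rational}.
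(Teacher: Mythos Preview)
The paper does not actually prove this statement. The Morse surgery formula is stated with the citation \cite{oz-sz-rational} and used as a black-box ingredient in the sketch of Theorem~\ref{thm:rational-surgery-formula}; the only comment the paper makes is the blanket assertion (about the rational surgery formula) that Ozsv\'ath and Szab\'o's ``argument still holds when the $\Spin^c$ structure is nontorsion, as long as grading information is ignored.'' No details are given for the rationally null-homologous case.

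Your proposal, by contrast, outlines how one would actually carry this out: repeat the Ozsv\'ath--Szab\'o mapping-cone argument, but at the one step where non-extremal cobordism maps must be shown to vanish, replace the torsion/grading argument by the $U$-nilpotence mechanism of Section~3 (Proposition~\ref{thm:ni-3.5} plus conjugation invariance). That is exactly the modification the paper implements for null-homologous integer surgeries, and your identification of the $\Spin^c$ bookkeeping (labeling of relative $\Spin^c$ structures, self-intersection of $\widehat F$, and the large-surgery threshold for rationally null-homologous knots) as the main technical cost is accurate. So your approach is sound and more detailed than anything the paper provides here; the paper simply defers the Morse surgery formula to \cite{oz-sz-rational} rather than reproving it.
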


We recall the definition of $U$-knots from \cite{oz-sz-rational} Definition 5.2.

\begin{definition}
Let $Y$ be a rational homology three-sphere. A $U$-knot $K\subset Y$ is a knot such that for any $\xi\in\underline{\Spin^c}(Y,K)$, $CFK^\infty(Y,K,\xi)$ is relatively $\Z\oplus\Z$-filtered chain homotopy equivalent to a $U$-tower $R$, which is the free, rank one $\Z[U,U^{-1}]$-module with zero differential.
\end{definition}

\begin{proposition}[\cite{oz-sz-rational}, Corollary 5.3]
If $K_2\subset Y_2$ is a $U$-knot, then for each $\xi_1\in\underline{\Spin}^c(Y_1,\overline{K}_1)$ and $\frak s_2\in\Spin^c(Y_2)$, there is some $\xi_2\in\underline{\Spin^c}(Y_2)$ representing $\frak s_2$, with the property that
$$CFK^\infty(Y_1,\overline{K_1},\xi_1)\cong CFK^\infty(Y_1\#Y_2,\overline{K_1}\#\overline{K_2},\xi_1\#\xi_2)$$ as $\Z\oplus\Z$-filtered chain complexes.
\end{proposition}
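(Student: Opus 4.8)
The plan is to derive the statement from the Künneth formula for connected sums of knot Floer complexes together with the definition of a $U$-knot. Recall (see \cite{oz-sz-rational}, adapting Ozsv\'ath and Szab\'o's connected sum formula to $\Z\oplus\Z$-filtered knot Floer complexes) that for rationally null-homologous knots $K_1\subset Y_1$, $K_2\subset Y_2$ and relative $\Spin^c$ structures $\xi_1,\xi_2$ there is a $\Z\oplus\Z$-filtered chain homotopy equivalence
$$CFK^\infty(Y_1\#Y_2,\overline{K_1}\#\overline{K_2},\xi_1\#\xi_2)\simeq CFK^\infty(Y_1,\overline{K_1},\xi_1)\otimes_{\Z[U,U^{-1}]}CFK^\infty(Y_2,\overline{K_2},\xi_2),$$
where the right-hand side carries the tensor-product $\Z\oplus\Z$-filtration. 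Thus it suffices to choose $\xi_2$ lying over $\frak s_2$ so that tensoring with $CFK^\infty(Y_2,\overline{K_2},\xi_2)$ alters $CFK^\infty(Y_1,\overline{K_1},\xi_1)$ only by a filtration translation that can be undone by an isomorphism.

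By the definition of a $U$-knot, for every $\xi_2\in\underline{\Spin^c}(Y_2,\overline{K_2})$ the complex $CFK^\infty(Y_2,\overline{K_2},\xi_2)$ is $\Z\oplus\Z$-filtered chain homotopy equivalent to a free rank-one $\Z[U,U^{-1}]$-module $R$ with trivial differential, whose generator sits at some filtration level $(a(\xi_2),b(\xi_2))$ and on which $U$ shifts the filtration by $(-1,-1)$. For any $\Z\oplus\Z$-filtered $\Z[U,U^{-1}]$-complex $C$, the tensor product $C\otimes_{\Z[U,U^{-1}]}R$ is isomorphic, as a filtered complex, to $C$ with its filtration translated by $(a(\xi_2),b(\xi_2))$. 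Combining this with the Künneth equivalence identifies $CFK^\infty(Y_1\#Y_2,\overline{K_1}\#\overline{K_2},\xi_1\#\xi_2)$ with $CFK^\infty(Y_1,\overline{K_1},\xi_1)$ up to translating the filtration by $(a(\xi_2),b(\xi_2))$. A diagonal translation is realized by an isomorphism given by multiplication by a power of $U$, since $U$ shifts both filtration coordinates equally; so it suffices to arrange $a(\xi_2)=b(\xi_2)$.

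Finally, I would analyze the fiber of the forgetful map $\underline{\Spin^c}(Y_2,\overline{K_2})\to\Spin^c(Y_2)$ over $\frak s_2$. Since $Y_2$ is a rational homology sphere and $\overline{K_2}$ is rationally null-homologous, this fiber is a $\Z$-torsor, and replacing $\xi_2$ by its translate under a generator of this torsor (concretely, by adding $\PD[\mu]$ for $\mu$ a meridian of $K_2$) shifts the relative Alexander grading, hence the difference $b(\xi_2)-a(\xi_2)$, by $\pm 1$, with the remaining data transforming compatibly. Therefore there is a $\xi_2$ over $\frak s_2$ with $a(\xi_2)=b(\xi_2)$; for this $\xi_2$, postcomposing the equivalences above with multiplication by an appropriate power of $U$ yields the asserted $\Z\oplus\Z$-filtered identification, the symbol $\cong$ being understood, as is customary, as filtered chain homotopy equivalence.

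I expect the main obstacle to be the $\Spin^c$-theoretic bookkeeping of the last step: pinning down exactly how a choice of $\xi_2$ over the fixed $\frak s_2$ determines the filtration level $(a(\xi_2),b(\xi_2))$ of the generator of $R$, and checking that its dependence on $\xi_2$ has the required form, namely that the difference $b(\xi_2)-a(\xi_2)$ is a bijective affine function of the torsor parameter, so that the translation can be made diagonal and hence trivialized. The Künneth equivalence and the triviality of tensoring with a free rank-one complex are formal once these conventions, and the compatibility of the operation $\xi_1\#\xi_2$ with them, have been fixed as in \cite{oz-sz-rational}.
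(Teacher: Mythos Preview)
The paper does not supply its own proof of this proposition: it is stated with a citation to \cite{oz-sz-rational} and no argument is given. Your proposal is a faithful reconstruction of the standard argument from that reference, namely the K\"unneth formula for connected sums combined with the fact that tensoring with a free rank-one $\Z[U,U^{-1}]$-module only translates the $\Z\oplus\Z$-filtration, together with the freedom to choose $\xi_2$ within the fiber over $\frak s_2$ so as to make that translation diagonal. There is nothing to compare against in the present paper, and your outline is essentially the one found in Ozsv\'ath--Szab\'o; the point you flag as the main obstacle (tracking how the filtration level of the generator of $R$ depends on $\xi_2$ along the $\Z$-torsor) is indeed the only place where care is required, and it is handled there.
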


\begin{lemma}[\cite{oz-sz-rational}, Lemma 7.1]
$O_{q/r}\subset -L(q,r)$ is a $U$-knot. There is an affine identification $\phi$ of the $\Spin^c$ structures on $\widehat{HFK}(L(q,r),O_{q/r})$ with $\Z$ such that
$$\widehat{HFK}(L(q,r),O_{q/r},\phi(i))\cong\begin{cases}
    \Z & \text{if $0\le i\le q-1$,} \\
    0 & \text{otherwise.}
\end{cases}$$
\end{lemma}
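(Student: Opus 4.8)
The plan is to prove the two halves of the lemma separately, since the $U$-knot property and the computation of knot Floer homology are somewhat orthogonal. Throughout, recall that $O_{q/r}$ is obtained by regarding one component of the Hopf link as a knot in $-L(q,r)$, where $-L(q,r)$ arises as $-q/r$-surgery on the other Hopf component in $S^3$; equivalently, $O_{q/r}$ is a core of one of the two solid tori in the genus-one Heegaard splitting of $L(q,r)$. Its complement is a solid torus, which is the key geometric fact underlying everything.

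\textbf{The $U$-knot property.} First I would observe that the complement of $O_{q/r}$ in $-L(q,r)$ is a solid torus $V$. Taking a doubly-pointed genus-one Heegaard diagram $(\Sigma, \alpha, \beta, w, z)$ adapted to $O_{q/r}$ (the standard $(1,1)$-diagram for a core of a lens-space Heegaard torus), the two basepoints can be chosen on the two sides of the single bigon-type picture so that there is exactly one intersection point in each $\Spin^c$ structure $\xi$, and no positive holomorphic disks contributing to $\partial$. Hence $CFK^\infty(-L(q,r),O_{q/r},\xi)$ is, as a $\Z\oplus\Z$-filtered complex, generated over $\Z[U,U^{-1}]$ by a single generator with vanishing differential — precisely the complex $R$ in the definition of a $U$-knot. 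I would note that the filtration level of this generator depends on $\xi$, but for each fixed $\xi$ the $(i,j)$-filtration is concentrated on the single diagonal $\{i - j = \text{const}\}$ associated to the Alexander grading, so the filtration is trivial in the relevant sense; this is the content of the claim. (Alternatively, one can cite that $\widehat{HFK}$ of a $(1,1)$-knot which is a core curve is supported in a single Alexander grading per $\Spin^c$ structure, together with the fact that $HF^\infty$ of a lens space is standard, to force triviality of the induced filtration on $CF^\infty$.)

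\textbf{The knot Floer homology computation.} For the second assertion I would again use the $(1,1)$-diagram. Since $-L(q,r)$ has $\widehat{HF}(-L(q,r),\mathfrak{s}) \cong \Z$ for every $\mathfrak{s}$, and there are $q$ $\Spin^c$ structures, summing over all $\xi \in \underline{\Spin^c}$ gives total rank $q$. The set $\underline{\Spin^c}(L(q,r),O_{q/r})$ is an affine space over $\Z \cong H^2(L(q,r), L(q,r)\setminus O_{q/r}) \cong H^2(\text{solid torus rel boundary torus})$, so it is identified with $\Z$ (non-canonically, hence the affine identification $\phi$); the map to $\Spin^c(L(q,r)) \cong \Z/q\Z$ is reduction mod $q$. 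Thus exactly one Alexander grading in each residue class mod $q$ carries a nonzero group, and by a suitable normalization of $\phi$ these $q$ gradings can be taken to be $0, 1, \ldots, q-1$, each with $\widehat{HFK} \cong \Z$. The fact that the relevant Alexander gradings form a set of $q$ consecutive integers (rather than some spread-out set) follows from the linear dependence of the Alexander/filtration grading on the $\Spin^c$ parameter for a core knot — this is exactly the $U$-knot triviality of the filtration, so the two parts of the lemma are really the same computation viewed two ways.

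\textbf{Main obstacle.} The routine-looking but genuinely delicate point is keeping track of the gradings and $\Spin^c$-labelings consistently: matching Ozsv\'ath--Szab\'o's conventions for $\underline{\Spin^c}(Y,K)$, the relative Alexander grading, and the affine identification $\phi$, so that the final answer is literally supported in $\{0,\ldots,q-1\}$ and not merely in $q$ consecutive integers up to an unspecified shift. I expect to handle this by fixing the normalization of $\phi$ so as to make the statement true by definition, and then verifying only that (i) the support has exactly $q$ elements, (ii) they are consecutive, and (iii) each group is $\Z$ — all of which follow from the solid-torus complement and the rank count above. The holomorphic-disk analysis itself is essentially trivial for a core of a Heegaard torus, so no hard analysis is needed; one may also simply invoke the corresponding computation already present in \cite{oz-sz-rational}.
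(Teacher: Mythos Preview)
The paper does not actually prove this lemma: it is stated with a bare citation to \cite{oz-sz-rational} and no argument is given. Your proposal supplies a correct sketch of the standard proof --- using the genus-one doubly-pointed Heegaard diagram for the core of a lens-space Heegaard torus, where each $\Spin^c$ structure carries a single generator and hence a rank-one $CFK^\infty$ with trivial filtration --- which is essentially what one finds in the cited reference; so your approach and the paper's are the same in the sense that both defer to Ozsv\'ath--Szab\'o, except that you actually outline the computation. The only mildly soft step in your sketch is the consecutiveness of the $q$ supported Alexander gradings, but as you note this follows directly from the explicit $(1,1)$ diagram (adjacent generators differ by Alexander grading one) or can simply be absorbed into the normalization of the affine identification $\phi$.
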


Let $K\subset Y$ be a null-homologous knot. As in section 2.2 of [OS10], $H^2(Y,K)$ is defined to be $H^2(Y-\nd(K),\partial\nd(K))$ and is an affine space for $\underline{\Spin^c}(Y,K)$. There is a natural map $\underline{\Spin^c}(Y,K)\to\Spin^c(Y)$ which allows us to identify $$\Spin^c(Y)\cong\frac{\underline{\Spin^c}(Y,K)}{\Z\cdot\PD[\mu]}.$$ Fix a $\Spin^c$ structure $\frak t$ over $Y$, and under the above identification, let $H^2(Y,K,\frak t)\cong\Z$ denote the subgroup of $H^2(Y,K)$ corresponding to $\frak t$, and also $H^2(Y\#-L(q,r),K\# O_{q,r},\frak t)\cong\Z$.

With this notation, the following more general version of \cite[Lemma 7.2]{oz-sz-rational} holds with analogous proof.

\begin{lemma}[\cite{oz-sz-rational}, Lemma 7.2]
Let $K\subset Y$ be a null-homologous knot. Under the connected sum
$$(Y,K)\#(-L(q,r),O_{q,r})\to(Y\#-L(q,r),K\# O_{q,r}),$$ the diagram in Figure~\ref{fig:h2-rational-commutative-diagram} commutes where $f(x,y)=qx+y$. Moreover, under this correspondence, if $K_\lambda$ is the pushoff of $K$ with respect to the integral framing $a$, then $\PD[K_\lambda]$ represents the element $p\in\Z\cong H^2(Y\#-L(q,r),K\# O_{q,r})$, where $p/q=a+r/q$.

\begin{figure}[h]
\centering
\begin{tikzcd}
\mathbb{Z}\oplus\mathbb Z \arrow[rrr, "f"] \arrow[d, "\cong"] &  &  & \mathbb Z \arrow[d, "\cong"]           \\
{H^2(Y,K,\mathfrak t)\oplus H^2(-L(q,r),O_{q,r})} \arrow[rrr] &  &  & {H^2(Y\# -L(q,r),K\# O_{q,r},\frak t)}
\end{tikzcd}
\caption{}
\label{fig:h2-rational-commutative-diagram}
\end{figure}

\end{lemma}

\begin{proof}
Follows as in \cite{oz-sz-rational}.
\end{proof}

With these ingredients, the proof of Theorem~\ref{thm:rational-surgery-formula} follows as in \cite{oz-sz-rational}.

\section{Cosmetic Surgeries}

In this section, we prove our main results. For simplicity, we consider coefficients in the field $\F=\F_2$. Throughout, we let $K\subset Y$ be a null-homologous knot in a closed, oriented three-manifold. We will obstruct the result of Dehn surgeries on $K$ from being homeomorphic to each other using the rank of $\wh{HF}(Y_r(K))$ for various positive rational numbers $r$.

We state some lemmas that will be useful in calculating the homology of the mapping cones for Dehn surgery. 

The following lemma was proven in \cite{ni-property-g} Equations (5) and (6).

\begin{lemma}
If $s\le t$, then $\im(\wh v_s)_*\subset\im(\wh v_t)_*$ and $\im(\wh h_s)_*\supset\im(\wh h_t)_*$.
\end{lemma}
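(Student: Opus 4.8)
The plan is to work with the vertical and horizontal maps $\wh v_s, \wh h_s : \wh A_s \to \wh B$ at the level of the knot Floer complex $C = \widehat{CFK}(Y,K)$, recalling that $\wh A_s = C\{\max(i, j-s) = 0\}$ and $\wh B = C\{i = 0\}$, with $\wh v_s$ the vertical projection and $\wh h_s$ the composite of horizontal projection, multiplication by $U^s$, and the filtered homotopy equivalence $C\{j = 0\} \to C\{i = 0\}$. The two inclusions are essentially dual to each other, so I would prove the statement about $\wh v_s$ carefully and then obtain the statement about $\wh h_s$ either by the symmetric argument or by invoking the symmetry of $CFK^\infty$ under swapping the two filtrations together with $U$-powers.

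First I would set up the comparison map between $\wh A_s$ and $\wh A_t$ for $s \le t$. There is a natural chain map $\wh A_s \to \wh A_t$ which is the identity on the common part $C\{i \ge 0, j \le t\}$ and a projection/inclusion on the rest; concretely, increasing $s$ enlarges the region defining $A_s$ in the $j$-direction, so one has a quotient map $C\{\max(i,j-s)=0\} \to C\{\max(i,j-t)=0\}$ that is compatible with the vertical projections to $\wh B$. The key point is that the diagram
\[
\begin{tikzcd}
\wh A_s \arrow[r] \arrow[d, "\wh v_s"'] & \wh A_t \arrow[d, "\wh v_t"] \\
\wh B \arrow[r, "="] & \wh B
\end{tikzcd}
\]
commutes up to chain homotopy, because both composites are the vertical projection of $C$ onto $C\{i=0\}$ restricted to the respective domains. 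Passing to homology gives $(\wh v_s)_* = (\wh v_t)_* \circ (\text{comparison map})_*$, hence $\im(\wh v_s)_* \subseteq \im(\wh v_t)_*$. For the horizontal maps, the analogous comparison map goes $\wh A_t \to \wh A_s$ (since the horizontal projection behaves oppositely), and commutativity of the corresponding square with $\wh h_t$ and $\wh h_s$ yields $(\wh h_t)_* = (\wh h_s)_* \circ (\text{comparison})_*$, so $\im(\wh h_t)_* \subseteq \im(\wh h_s)_*$, which is the reverse inclusion claimed.

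The main obstacle is verifying that the relevant squares commute \emph{on the nose or up to homotopy}, rather than merely on homology, and in particular checking that the horizontal comparison map is compatible with the $U^s$-factor and the filtered homotopy equivalence $C\{j=0\} \xrightarrow{\sim} C\{i=0\}$ built into the definition of $\wh h_s$; one must confirm that a single choice of this homotopy equivalence can be used coherently for all $s$, so that the maps $\wh h_s$ for varying $s$ genuinely differ only by the vertical/horizontal truncation. Since this is a known result (\cite{ni-property-g}), I expect the argument to reduce to unwinding these definitions on the chain level, and the inclusions then follow immediately. One could alternatively cite the large-surgery identification (Theorem~\ref{thm:oz-sz-2.3}) to reinterpret $\wh v_s, \wh h_s$ as cobordism maps $F^+_{W'_n,\frak x_k}$, $F^+_{W'_n,\frak y_k}$ and deduce the inclusions from naturality of cobordism maps under the relevant handle slides, but the direct chain-level argument is cleaner.
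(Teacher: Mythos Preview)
The paper does not supply its own argument; it simply cites \cite{ni-property-g}. Your approach is correct and is the standard one. For the vertical maps the square commutes on the nose, not merely up to homotopy: since $A_s^+$ is the quotient of $CFK^\infty$ by $C\{i<0,\,j<s\}$ and this subcomplex is contained in $C\{i<0,\,j<t\}$ when $s\le t$, there is a further quotient $q:A_s^+\to A_t^+$ with $v_t^+\circ q=v_s^+$; restricting to $\ker U$ gives $\wh q:\wh A_s\to\wh A_t$ with $\wh v_t\circ\wh q=\wh v_s$, hence $\im(\wh v_s)_*\subset\im(\wh v_t)_*$. (Minor slip: increasing $s$ \emph{shrinks}, not enlarges, the region defining $A_s^+$, though the direction of your quotient map is correct.)

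For the horizontal maps your claimed ``analogous comparison map $\wh A_t\to\wh A_s$'' is not as immediate as you suggest: the only natural quotient runs from $\wh A_s$ to $\wh A_t$, and a chain map in the direction you need would have to incorporate a $U^{t-s}$ shift together with the filtered equivalence $C\{j=0\}\simeq C\{i=0\}$, precisely the compatibility you flag as the main obstacle. The clean fix is the one you already propose: invoke the equivalence $\wh v_s\simeq\wh h_{-s}$ (stated as the very next lemma in the paper) to reduce the $h$-statement to the $v$-statement already proved. With that reduction the argument is complete.
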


As a consequence of the hat version of Lemma~\ref{lem:v-h-equiv}, $\rank H_*(\wh A_s)=\rank H_*(\wh A_{-s})$ and $\rank (\wh v_s)_*=\rank(\wh h_{-s})_*$ for all $s$.

\begin{lemma}\label{lem:hat-plus-iso}
Suppose for some $s\in\Z$ that $\wh v_s$ or $\wh h_s$ induces an isomorphism on homology. Then $v_s^+$ or $h_s^+$ also induces an isomorphism on homology, respectively.
\end{lemma}

\begin{proof}
We prove this lemma for $\wh v_s$, and the case for $\wh h_s$ is proven similarly. We will show by induction that $(v_{s}^\delta)_*$ is an isomorphism for all $\delta\ge 0$, which by Lemma 2.7 in \cite{oz-sz-integer} implies that $(v_{g-1}^+)_*$ is an isomorphism. Suppose that $(v_{s}^\delta)_*$ is an isomorphism for some $\delta\ge 0$ (the base case $\delta=0$ is $v_{s}^0=\wh v_{s}$). 

Clearly, the maps $\wh v_s$, $v_s^{\delta+1}$, and $v_s^\delta$ fit into the commutative diagram of short exact sequences in Figure~\ref{fig:v-delta-ses}. 
Therefore, the diagram in Figure~\ref{fig:v-hat-not-isomorphism} commutes and the rows are exact, so by the five lemma, $(v_{s}^{\delta+1})_*$ is also an isomorphism. 
\end{proof}

\begin{figure}[h]
\centering
\begin{tikzcd}
0 \arrow[r] & \wh A_s \arrow[r, "i"] \arrow[d, "\wh v_s"] & A_s^{\delta+1} \arrow[r, "U"] \arrow[d, "v_s^{\delta+1}"] & A_s^\delta \arrow[r] \arrow[d, "v_s^\delta"] & 0 \\
0 \arrow[r] & \wh B \arrow[r, "i"] & B^{\delta+1} \arrow[r, "U"] & B^\delta \arrow[r] & 0
\end{tikzcd}
\caption{}
\label{fig:v-delta-ses}
\end{figure}

\begin{figure}[h]
\centering
\begin{tikzcd}
H_*(A_{s}^\delta) \arrow[r, "\partial"] \arrow[d, "(v_{s}^\delta)_*"] & H_*(\widehat A_{s}) \arrow[r, "i_*"] \arrow[d, "(\widehat v_{s})_*"] & H_*(A_{s}^{\delta+1}) \arrow[r, "U_*"] \arrow[d, "(v_{s}^{\delta+1})_*"] & H_*(A_{s}^\delta) \arrow[r, "\partial"] \arrow[d, "(v_{s}^\delta)_*"] & H_*(\widehat A_{s}) \arrow[d, "(\widehat v_{s})_*"] \\
H_*(B^\delta) \arrow[r, "\partial"]                                  & H_*(\widehat B) \arrow[r, "i_*"]                                         & H_*(B^{\delta+1}) \arrow[r, "U_*"]                                  & H_*(B^\delta) \arrow[r, "\partial"]                                  & H_*(\widehat B)                                        
\end{tikzcd}
\caption{}
\label{fig:v-hat-not-isomorphism}
\end{figure}

\begin{lemma}[\cite{ni-property-g} Lemma 4.1]\label{lem:blow-up-unknot-prop}
Let $K\subset Y$ be a null-homologous knot. Suppose that $F$ is a taut Seifert surface for $K$. Let $X=(Y\times[0,1])\# N\overline{\C P^2}$ for some $N$, and let $\iota:(Y,K)\to(X,K\times\{0\})$ be the inclusion map. Let $G\subset X$ be a properly-embedded smooth connected surface with $\partial G=K\times\{0\}$, $[G]=\iota_*[F]\in H^2(X,K\times\{0\})$, and $g(G)<g(F)$. Then $$\im(v_k^+)_*\supset \im(h_k^+)_*,$$ when $k\ge g(G)$. Moreover, if $G$ is a disk, then
$$(v_0^+)_*=(h_0^+)_*$$
\end{lemma}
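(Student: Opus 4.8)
The plan is to show that the cobordism map $F_{W'_n(K),\mathfrak s}^+$ associated to any $\Spin^c$ structure $\mathfrak s$ on the two-handle cobordism $W'_n(K)$ factors through a map of the complex $A_k^+$, and then analyze that factorization geometrically using the disk $G$ in the blown-up cobordism $X$. Concretely, following \cite{ni-property-g} Lemma 4.1, I would first reduce to the statement that $\im(v_k^+)_*\supset\im(h_k^+)_*$ by identifying, via Theorem~\ref{thm:oz-sz-2.3}, the maps $v_k^+$ and $h_k^+$ with the cobordism maps $F_{W'_n(K),\mathfrak x_k}^+$ and $F_{W'_n(K),\mathfrak y_k}^+$ for the two distinguished $\Spin^c$ structures $\mathfrak x_k,\mathfrak y_k$ (after stabilizing $n$ to be large enough, which changes neither side of the claimed inclusion). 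So it suffices to produce, for each $k\ge g(G)$, a factorization of $F_{W'_n(K),\mathfrak y_k}^+$ through $F_{W'_n(K),\mathfrak x_k}^+$ up to the $\Z[U]$-action.

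Next I would build the factorization out of the surface $G$. Attach the two-handle cobordism $W'_n(K)$ to $X$ along $K\times\{0\}\subset Y\times\{1\}$; inside the resulting four-manifold $G$ caps off to a closed surface $\widehat G$ of genus $g(G)$ whose homology class agrees with that of the capped Seifert surface $\widehat F_n$, since $[G]=\iota_*[F]$. The adjunction inequality (for the $HF^+$ cobordism maps, applied to $\widehat G$ with $g(\widehat G)$ small) then forces the composite cobordism maps corresponding to $\Spin^c$ structures evaluating too positively against $[\widehat G]$ to vanish, and comparing the evaluations of $\mathfrak x_k$ and $\mathfrak y_k$ against $[\widehat F_n]$ (they differ by $2n$, i.e.\ by $2\PD[\widehat F_n]$) shows that the difference $h_k^+-$ (something in the image of $v_k^+$) is killed once $k\ge g(G)$. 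This is exactly the mechanism in Ni's proof, with his surface of genus $<g(F)$ replaced by our $G$; the only change is bookkeeping of which $\Spin^c$ structures survive. When $G$ is a disk, $g(G)=0$, so the inclusion holds already at $k=0$; combined with the symmetric inclusion obtained by conjugation invariance (Theorem~\ref{thm:conjugation-invariance}), which gives $\im(v_0^+)_*\subset\im(h_0^+)_*$ from $\overline{\mathfrak x}_0=\mathfrak y_0$ and $\overline{\mathfrak y}_0=\mathfrak x_0$ in the $k=0$ symmetric case, one gets $(v_0^+)_*=(h_0^+)_*$.

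The main obstacle is the transition from the $\widehat{HF}$ or $HF^-$ adjunction-vanishing statements to the $HF^+$ image-inclusion statement with the $U$-action correctly tracked: one must be careful that the blow-ups $N\overline{\C P^2}$ contribute exceptional classes $E_i$, and that the relevant $\Spin^c$ structures on $X\#_\partial W'_n(K)$ can be chosen to evaluate as $\pm1$ on each $E_i$ so that the blow-up formula identifies their cobordism maps with those on $W'_n(K)$ up to sign and does not introduce extra powers of $U$. Handling this cleanly — essentially checking that the presence of $X$ does not disturb the identification of the truncated cobordism maps with $v_k^+,h_k^+$ from Theorem~\ref{thm:oz-sz-2.3} — is where the real work lies; the rest is a direct transcription of \cite{ni-property-g} Lemma 4.1 with $g(F)$ weakened to $g(G)$.
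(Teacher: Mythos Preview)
The paper does not contain its own proof of this lemma; it is quoted directly from \cite{ni-property-g} without argument, so there is no in-paper proof to compare against. Your outline for the image-inclusion half does follow Ni's strategy: glue $X$ to $W'_n(K)$, cap $G$ to a closed surface $\widehat G$ of genus $g(G)$ in the class of $\widehat F_n$, and use an adjunction relation together with the blow-up formula to compare $F_{\mathfrak x_k}^+$ and $F_{\mathfrak y_k}^+$. The issue you flag --- that the blow-up formula must be checked not to disturb the identification of Theorem~\ref{thm:oz-sz-2.3} --- is indeed where the care lies, and you have identified it correctly.

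There is, however, a genuine gap in your disk argument. Conjugation invariance (Theorem~\ref{thm:conjugation-invariance}) intertwines $F_{W,\mathfrak x_0}^+$ on the $\mathfrak t$-summand with $F_{W,\overline{\mathfrak x_0}}^+$ on the $\overline{\mathfrak t}$-summand via the isomorphisms $\mathfrak I$; it does not yield the reverse inclusion $\im(v_0^+)_*\subset\im(h_0^+)_*$ inside the same $\Spin^c$ summand unless $\mathfrak t=\overline{\mathfrak t}$, which is not assumed. More fundamentally, even granting $\im(v_0^+)_*=\im(h_0^+)_*$, that is strictly weaker than the asserted equality $(v_0^+)_*=(h_0^+)_*$ of induced maps: two maps with the same image need not be equal. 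In Ni's argument the map equality comes from a direct application of the adjunction relation to the \emph{sphere} $\widehat G$: since $\widehat G$ has genus zero and $\mathfrak y_0-\mathfrak x_0=\PD[\widehat F_n]=\PD[\widehat G]$ with $\langle c_1(\mathfrak x_0),[\widehat G]\rangle$ extremal, the relation identifies the two cobordism maps themselves (not merely their images), and this survives the blow-up formula to give $(v_0^+)_*=(h_0^+)_*$. Your conjugation shortcut does not substitute for this step.
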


The same is true for the hat version of knot Floer homology by essentially the same proof.

Again, call a knot where such $(X,G)$ for $G$ a disk exists a blow-up bounding knot. We also consider the unknot itself to be a blow-up bounding knot.

The following argument is due to Ni:

\begin{lemma}\label{lem:rank-a-b}
For $K\subset Y$ a blow-up unknotted knot, $(\wh v_0)_*:H_*(\wh A_0)\to H_*(\wh B)$ is surjective.
\end{lemma}
\begin{proof}
Denote by $O$ the unknot in $Y$. There is a sequence of $N$ crossing changes changing a positive crossing to a negative one that changes $O$ to the knot $K$. Let $W_n'(K):Y_n(K)\to Y$ and $W_n'(O):Y_n(O)\to Y$ denote the natural two-handle cobordisms. Fix a $\Spin^c$ structure $\frak t$ on $Y$, let $\frak x_0(O)\in\Spin^c(W_n'(O))$ and $\frak x_0(K)\in\Spin^c(W_n'(K))$ as in Section 2 restrict to the $\Spin^c$ strctures $\frak x_0(O)\vert Y=\frak x_0(K)\vert Y=\frak t$ and $\frak t_0(O)=\frak x_0(O)\vert Y_n(O)$, $\frak t_0(K)=\frak x_0(K)\vert Y_n(K)$. Let $\frak S$ be the $\Spin^c$ structure on $W_n'(O)\# N\overline{\C P^2}$ given by the connected sum of $\frak x_0(O)$ with $N$ copies of $\frak s_1\in\Spin^c(\overline{\C P^2})$, where $\frak s_1$ satisfies that $c_1(\frak s_1)$ generates $H^2(\overline{\C P^2})$. 

By Kirby calculus, the Kirby diagram of the $n$-framed unknot in $Y$ blown up $N$ times is equivalent to the diagram of the $n$-framed knot $K$ in $Y$. Thus, the cobordism $W_n'(O)\# N\overline{\C P^2}$ is the composition of two cobordisms
$$Y_n(O)\xrightarrow{X_n(K)}Y_n(K)\xrightarrow{W_n'(K)}Y$$ for some cobordism $X_n(K)$. The $\Spin^c$ structure $\frak S$ restricts to $\frak t_0(O)$ on $Y_n(O)$, $\frak t_0(K)$ on $Y_n(K)$, and $\frak t$ on $Y$. Therefore, $X_n(K)$ induces a cobordism map 
$$\Phi=\wh F_{X_n(K),\frak S\vert X_n(K)}:\wh{HF}(Y_n(O),\frak t_0(O))\to\wh{HF}(Y_n(K),\frak t_0(K))$$ that by Theorem~\ref{thm:oz-sz-2.3} fits into the commutative diagram in Figure~\ref{fig:blow-up-unknotted}. By a sample calculation, $(\wh v_0(O))_*$ is surjective. Therefore $(\wh v_0(K))_*$ must also be surjective. Direct summing over all $\frak t\in\Spin^c(Y)$ proves the statement.

\begin{figure}[h]
\centering
\begin{tikzcd}
{\wh{HF}(Y_n(O),\frak t_0(O))} \arrow[rr, "\Phi"] \arrow[d, "\cong"] &  & {\wh{HF}(Y_n(K),\frak t_0(K))} \arrow[d, "\cong"] \\
H_*(\wh A_{0,\frak t}(O)) \arrow[rr] \arrow[d, "(\wh v_0(O))_*"]      &  & H_*(\wh A_{0,\frak t}(K)) \arrow[d, "(\wh v_0(K))_*"] \\
H_*(\wh B_{\frak t}(O)) \arrow[rr, "="]                           &  & H_*(\wh B_{\frak t}(K))                          
\end{tikzcd}
\caption{}
\label{fig:blow-up-unknotted}
\end{figure}

\end{proof}

The following result and its proof are analogous to \cite{oz-sz-rational} Proposition 9.7.
\begin{proposition}\label{prop:genus}
Let $K\subset Y$ be a null-homologous knot. Then
$$g(K)=\max(0,\{s\in\Z\ \vert\ (\wh v_{s-1})_*\text{ is not an isomorphism}\}).$$ 
\end{proposition}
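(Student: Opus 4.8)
The plan is to prove the two inequalities $g(K) \ge \max(\{s : (\widehat v_{s-1})_* \text{ is not an isomorphism}\})$ and $g(K) \le \max(\{s : (\widehat v_{s-1})_* \text{ is not an isomorphism}\})$ separately, where we interpret the right-hand side as $0$ when the set in question is empty. (Strictly, I should first argue the set is bounded above, so the max is defined; but this is immediate from Lemma~\ref{lem:genus-finiteness}, which says $\widehat v_s$ is an isomorphism for $s \ge g(K)$, so no element of the set exceeds $g(K)$.) The first inequality is then exactly the content of Lemma~\ref{lem:genus-finiteness}: if $s - 1 \ge g(K)$, equivalently $s \ge g(K)+1 > g(K)$, then $(\widehat v_{s-1})_*$ is an isomorphism, so every $s$ in the set satisfies $s \le g(K)$. (And when $K$ is the unknot, $g(K) = 0 \ge 0$ trivially.)

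The substance is the reverse inequality: I must show that if $g := g(K) \ge 1$, then $(\widehat v_{g-1})_*$ fails to be an isomorphism, so that $g$ itself lies in the set and hence the max is $\ge g$. The strategy, following \cite{oz-sz-rational} Proposition 9.7, is to argue by contradiction: suppose $(\widehat v_{g-1})_*$ is an isomorphism. Using Lemma~\ref{lem:genus-finiteness} (for $s \ge g$) together with the monotonicity Lemma (if $s \le t$ then $\im(\widehat v_s)_* \subset \im(\widehat v_t)_*$), an assumption that $(\widehat v_{g-1})_*$ is an isomorphism would propagate: one gets that $(\widehat v_s)_*$ is an isomorphism for all $s \ge g-1$, and by the equivalence of $\widehat v_s$ with $\widehat h_{-s}$, correspondingly $(\widehat h_s)_*$ is an isomorphism for all $s \le -(g-1) = 1-g$. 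The point is that these statements about $\widehat v_{g-1}$ and $\widehat h_{g-1}$ translate, via Theorem~\ref{thm:oz-sz-2.3} and the large-$n$ surgery formula, into the statement that the knot filtration on $CF^+(Y,\frak t)$ is "trivial at level $g-1$" in a way that forces a Seifert surface of genus $\le g-1$ — contradicting $g(K) = g$. Concretely, I would translate the hypothesis into a statement about $\widehat{HFK}(Y,K,g)$, the top nonvanishing knot Floer group: by the standard relationship between the maps $\widehat v_s, \widehat h_s$ and the associated graded of the knot filtration, the failure of $(\widehat v_{g-1})_*$ to be an isomorphism is equivalent to $\widehat{HFK}(Y,K, g) \ne 0$ being detected by the filtration level $g-1$ vs. $g$ comparison, and Ozsv\'ath–Szab\'o's theorem that knot Floer homology detects the Seifert genus of $K$ (the genus bound $\widehat{HFK}(Y,K,s) = 0$ for $s > g(K)$, with nonvanishing at $s = g(K)$) then closes the loop.

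I expect the main obstacle to be making precise the passage from "$(\widehat v_{g-1})_*$ is an isomorphism" to the vanishing of the relevant top graded piece of the knot filtration, i.e.\ extracting genus information from the surgery maps $\widehat v_s$. This requires carefully unwinding the definitions of $\widehat v_s$ (vertical projection $C\{\max(i,j-s)\ge 0\} \to C\{i \ge 0\}$ in the hat flavor) and identifying the obstruction to it being a quasi-isomorphism with the homology of a quotient complex concentrated near the top Alexander grading; then invoking the genus-detection theorem for knot Floer homology in a general three-manifold. A secondary point is handling the bookkeeping across $\Spin^c$ structures $\frak t$ on $Y$ and confirming that "genus of $K$" means the maximum over the relevant components, so the max on the right-hand side is correctly computed; this is routine but should be stated. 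Everything else — the monotonicity propagation, the $\widehat v_s \leftrightarrow \widehat h_{-s}$ symmetry, the reduction to the unknot case when $g(K) = 0$ — follows directly from the lemmas already recorded above.
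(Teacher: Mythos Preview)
Your outline is right in broad strokes: the inequality $g(K)\ge\max(\cdots)$ is immediate from Lemma~\ref{lem:genus-finiteness}, and the substance is showing $(\widehat v_{g-1})_*$ is not an isomorphism when $g=g(K)\ge 1$, with the input being $\widehat{HFK}(Y,K,g)\ne 0$. But the step you flag as the ``main obstacle'' is genuinely unresolved in your proposal, and your suggested route through monotonicity of images, the $\widehat v_s\leftrightarrow\widehat h_{-s}$ symmetry, and Theorem~\ref{thm:oz-sz-2.3} does not close it. Monotonicity of $\im(\widehat v_s)_*$ tells you nothing about kernels, so propagation of ``isomorphism'' does not follow; and the large-surgery identification of $A_k^+$ with $CF^+(Y_n(K),\frak s_t)$ does not by itself extract the top Alexander grading from the hat-level map $\widehat v_{g-1}$.

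The paper's resolution is different and more direct. It first works with the \emph{plus} version: from the short exact sequence
\[
0\to C\{i<0,\ j\ge g-1\}\to A_{g-1}^+\xrightarrow{\,v_{g-1}^+\,} B^+\to 0
\]
one reads off that the obstruction to $(v_{g-1}^+)_*$ being an isomorphism is $H_*(C\{i<0,\ j\ge g-1\})\cong\widehat{HFK}(K,g)\ne 0$. The remaining work is to transfer this to the hat version, and here the paper uses the $CF^\delta$ filtration of \cite{oz-sz-integer}: assuming $(\widehat v_{g-1})_*$ is an isomorphism, the five lemma applied to the long exact sequences coming from $0\to\widehat A_{g-1}\to A_{g-1}^{\delta+1}\xrightarrow{U}A_{g-1}^\delta\to 0$ (and the analogous sequence for $B$) shows inductively that $(v_{g-1}^\delta)_*$ is an isomorphism for all $\delta\ge 0$, whence $(v_{g-1}^+)_*$ is an isomorphism by Lemma~2.7 of \cite{oz-sz-integer}, a contradiction. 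This $CF^\delta$ bootstrap is the missing mechanism in your proposal; once you have it, the detours through monotonicity and $\widehat h_s$ are unnecessary.
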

In particular, $K$ is the unknot if and only if $(\wh v_s)_*$ is an isomorphism for all $s\ge 0$.

\begin{proof}
Let $g=g(K)$, by Lemma~\ref{lem:genus-finiteness}, 
$$g(K)\ge\max(0,\{s\in\Z\ \vert\ (\wh v_{s-1})_*\text{ is not an isomorphism}\}).$$ 

If $g(K)=0$, i.e. $K$ is the unknot, then $(\wh v_s)_*$ is an isomorphism for all $s\ge 0$. Otherwise, if $g(K)>0$, it suffices to show $(\wh v_{g-1})_*$ is not an isomorphism. Consider the short exact sequence
\begin{align}\label{ses:v-g-1}
0\to C\{i<0,j\ge g-1\}\to C\{i\ge 0 \text{ or } j\ge g-1\}\xrightarrow{v_{g-1}^+}C\{i\ge 0\}\to 0.
\end{align}
The middle term is $A_{g-1}^+$, and $C\{i\ge 0\}=B^+$. The assumption that $C$ is reduced together with the $U$-isomorphism and the fact that knot Floer homology detects genus implies that $C\{i,j\}$ is nonzero only if $-g\le i-j\le g$. Therefore,
$$H_*(C\{i<0, j\ge g-1\})\cong H_*(C\{(-1,g-1)\}),$$ which by the $U$-isomorphism is isomorphic to $$H_*(C\{0,g\})=\wh{HFK}(K,g),$$ and $\wh{HFK}(K,g)\ne 0$ since knot Floer homology detects genus. Therefore, $(v_{g-1}^+)_*$ is not an isomorphism. By Lemma~\ref{lem:hat-plus-iso}, $(\wh v_{g-1})_*$ is also not an isomorphism.

\end{proof}

We abuse notation to let $\wh A_{j/q}$ denote $\wh A_{\lfloor j/q\rfloor}$, and also $\wh B_{j/q}$, $\wh v_{j/q}$, $\wh h_{j/q}$ similarly.

\begin{lemma}\label{lem:cancellation-general}
Let $K\subset Y$ be a blow-up bounding knot, $p,q>0$ relatively prime, and let $x\in H_*(\wh A_{j/q})$ for some $j$. If $\lfloor j/q\rfloor\ge 0$, there exists $\alpha\in H_*(\oplus_{i\ge j+p}\wh A_{i/q})$ such that $(\wh D_{p/q})_*(x+\alpha)=(\wh v_{j/q})_*(x)$ (where we think of $H_*(\wh B_{j/q})$ as including into $H_*(\wh\B)$). Similarly, if $\lfloor j/q\rfloor\le 0$, there exists $\alpha\in H_*(\oplus_{i\le j-p}\wh A_{i/q})$ such that $(\wh D_{p/q})_*(x+\alpha)=(\wh h_{j/q})_*(x)$.
\end{lemma}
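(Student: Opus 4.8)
The plan is to reduce the statement to a telescoping computation inside the mapping cone, using the blow-up unknotted hypothesis to turn "inclusions of images" into a concrete cancellation procedure. First I would set up notation: the mapping cone $\X_{p/q,i,\frak t}^+$ has the $\A$-side indexed by $r \in \Z$ with the $r$-th factor equal to $A_{\lfloor(i+pr)/q\rfloor}^+$, and the differential component $D_{p/q}$ sends $a_r$ to $v_{\lfloor(i+pr)/q\rfloor}^+(a_r) + h_{\lfloor(i+p(r-1))/q\rfloor}^+(a_{r-1})$ in the $\B$-side. Passing to $\wh{}$ and to homology, a class $x \in H_*(\wh A_{j/q})$ sitting in the factor indexed by some $r_0$ (so that $\lfloor(i+pr_0)/q\rfloor = j/q$ in the shorthand, i.e. the index equals $j$ after clearing the floor — I will keep the paper's indexing convention) maps under $(\wh D_{p/q})_*$ to the sum of its image under $\wh v$ in the $\B$-factor at $r_0$ and its image under $\wh h$ in the $\B$-factor at $r_0+1$. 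The goal is to kill the stray $\wh h$-term by adding correction terms $\alpha$ supported in strictly higher-index $\A$-factors.

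The key step is the iterative cancellation. Since $K$ is blow-up unknotted, Lemma~4.9 (the Ni $G$-a-disk lemma) gives $\im(\wh v_k)_* \supseteq \im(\wh h_k)_*$ for all $k \ge 0$; combined with the monotonicity Lemma~4.5 and the $\wh v_s \simeq \wh h_{-s}$ equivalence, one gets that for every index $k \ge 0$ the image of $(\wh h_k)_*$ is contained in the image of $(\wh v_{k'})_*$ for any $k' \ge k$, in particular for the $v$-map attached to the next $\A$-factor. So given the unwanted term $(\wh h_{j/q})_*(x)$ appearing in the $\B$-factor indexed by $r_0+1$, I choose $a_{r_0+1} \in H_*(\wh A_{(j+p)/q})$ with $(\wh v_{(j+p)/q})_*(a_{r_0+1}) = (\wh h_{j/q})_*(x)$ — possible because $\lfloor(j+p)/q\rfloor \ge \lfloor j/q\rfloor \ge 0$ so the relevant $v$-map has image containing that of $\wh h_{j/q}$. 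This cancels the bad term but introduces a new bad term $(\wh h_{(j+p)/q})_*(a_{r_0+1})$ in the $\B$-factor at $r_0+2$, which we cancel by the same move with $a_{r_0+2} \in H_*(\wh A_{(j+2p)/q})$, and so on. After finitely many steps the relevant $\A$-index exceeds $g(K)$, at which point by Lemma~4.6 the map $\wh h$ out of that factor is nullhomotopic (equivalently $\wh v$ there is an isomorphism but more to the point $\wh h_s$ is null for $s \ge g+1$), so the process terminates; $\alpha = \sum_{t\ge 1} a_{r_0+t}$ is the desired correction, supported in $\oplus_{i \ge j+p}\wh A_{i/q}$, and $(\wh D_{p/q})_*(x+\alpha) = (\wh v_{j/q})_*(x)$ with the bad terms all cancelled. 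The $\lfloor j/q\rfloor \le 0$ case is symmetric: run the cancellation in the decreasing direction using $\im(\wh h_k)_* \supseteq \im(\wh v_k)_*$ for $k \le 0$ (again from Lemma~4.9 via the $\wh v \simeq \wh h_{-}$ symmetry and monotonicity), replacing $v$ by $h$ and termination now coming from $\wh v_{-s}$ nullhomotopic for $s \ge g+1$.

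The main obstacle is controlling the indices carefully: one must check that along the forward chain the floor-quantized indices $\lfloor(i+p(r_0+t))/q\rfloor$ are nondecreasing in $t$ and eventually exceed $g(K)$, and that at every intermediate stage the index is still $\ge 0$ so that Lemma~4.9's hypothesis $k \ge g(G) = 0$ applies, legitimizing the containment $\im(\wh h)_* \subseteq \im(\wh v)_*$ used to solve for the next $a$. Since $p > 0$ and $q > 0$, the sequence $i + p(r_0+t)$ strictly increases, so $\lfloor(i+p(r_0+t))/q\rfloor \to \infty$, giving both nondecreasingness (up to the floor) and eventual termination; and since it starts at $\lfloor j/q\rfloor \ge 0$ it stays $\ge 0$ throughout. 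A secondary point to handle cleanly is that we are working in homology and choosing preimages under $(\wh v)_*$ of elements known to lie in its image — this is fine set-theoretically but one should phrase the induction so that at each stage we genuinely have an element of the image, which is exactly what the containment of images guarantees. No grading needs to be tracked since the statement is purely about $\Z[U]$-module (indeed $\F$-vector-space) structure.
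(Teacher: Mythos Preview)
Your proposal is correct and follows essentially the same approach as the paper: both argue by iterative cancellation, using the containment $\im(\wh h_s)_*\subset\im(\wh v_s)_*$ for $s\ge 0$ (from the blow-up unknotted hypothesis) together with monotonicity to choose successive preimages $y_1,y_2,\dots$ under the $\wh v$-maps, with termination coming from the genus-finiteness lemma that $\wh h_s$ is nullhomotopic for $s\ge g+1$. Your discussion of the index bookkeeping (that $\lfloor(j+tp)/q\rfloor$ is nondecreasing and stays $\ge 0$) is slightly more explicit than the paper's, but the underlying argument is the same.
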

\begin{proof}
Since $K$ is blow-up bounding, by Lemma~\ref{lem:blow-up-unknot-prop}, for $s\ge 0$, we have $\im(\wh h_s)_*\subset\im(\wh v_s)_*\subset\im(\wh v_{s+1})_*\subset\dots$, and for $s\le 0$, $\im(\wh v_s)_*\subset\im(\wh h_s)_*\subset\im(\wh h_{s-1})_*\subset\dots$. Thus, in the first case, there exists $y_1\in H_*(\wh A_{(j+p)/q})$ such that $(\wh h_{j/q})_*(x)=(\wh v_{(j+p)/q})_*(y_1)$. If $(\wh h_{(j+p)/q})_*(y_1)=0$, then stop and set $\alpha=y_1$, otherwise keep going. Inductively, given $y_n$ in $H_*(\wh A_{(j+np)/q})$, there exists $y_{n+1}\in H_*(\wh A_{(j+(n+1)p)/q})$ such that $(\wh h_{(j+np)/q})_*(y_n)=(\wh v_{(j+(n+1)p)/q})_*$. By Lemma~\ref{lem:genus-finiteness}, this process eventually terminates and gives an element $\alpha=\sum_{n\ge 1}y_n\in H_*(\oplus_{i\ge j+p}\wh A_{i/q})$ with the desired condition.

Similarly, in the case where $\lfloor j/q\rfloor\le 0$, since $\im(\wh v_{j/q})_*\subset\im(\wh h_{(j-p)/q})_*$, we can inductively add cancellation terms to the left. 
\end{proof}

Using the mapping cone formula and the above lemmas, in the case where $K$ is blow-up bounding, we may now calculate the rank of $\wh{HF}(Y_r(K))$ for any positive rational number $r$.

\begin{lemma}
Let $K\subset Y$ be a blow-up bounding knot, let $p,q>0$ coprime. Let $b=\rank(H_*(\wh B))=\rank\wh{HF}(Y)$. Then
\begin{align}\label{eq:rank-formula-pos-general}
\begin{split}
    \rank \wh{HF}(Y_{p/q}(K)) &= q(\rank\ker(\wh v_0)_*+b-\rank(\wh v_0)_*) \\
    &+2q\sum_{s=1}^{g-1}(\rank\ker(\wh v_s)_*+b-\rank(\wh v_s)_*)+ 2t_K^{p/q}-pb,
\end{split}
\end{align}
where
\begin{align}
    t_K^{p/q} &= \sum_{j=0}^{p-1} \rank\left(\im(\wh v_{j/q})_* \cap \im(\wh h_{(j-p)/q})_*\right).
\end{align}
\end{lemma}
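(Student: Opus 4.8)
The plan is to compute the homology of the mapping cone $\X_{p/q,i,\frak t}^+$ summed over all $i$ and all $\frak t$, using the explicit description of $\wh{D}_{p/q}$ as the sum of $\wh v$ and $\wh h$ maps, and then exploit the fact that $K$ is blow-up unknotted to cancel almost everything. Recall that $\wh D_{p/q}: \wh\A \to \wh\B$ sends $\{(r,a_r)\}$ to $\{(r,b_r)\}$ with $b_r = \wh v_{\lfloor(i+pr)/q\rfloor}(a_r) + \wh h_{\lfloor(i+p(r-1))/q\rfloor}(a_{r-1})$, so as a map on the big direct sum indexed by the half-integers-over-$q$ of the form $j/q$, each $\wh A_{j/q}$ appears once as the source of a $\wh v_{\lfloor j/q\rfloor}$ and once as the source of an $\wh h_{\lfloor j/q\rfloor}$ (shifted), and each copy of $\wh B$ appears once as a target receiving one $\wh v$ and one $\wh h$. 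The rank of $H_*$ of a mapping cone $MC(f)$ is $\rank H_*(\text{source}) + \rank H_*(\text{target}) - 2\rank f_*$, so I will need to identify $\rank (\wh D_{p/q})_*$.

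First I would set up the bookkeeping: on homology, $(\wh v_s)_*$ is an isomorphism for $s \ge g$ and $(\wh h_s)_* = (\wh v_{-s})_*$ is an isomorphism for $s \le -g$ (Lemma~\ref{lem:genus-finiteness}), while $(\wh v_{-s})_*$ and $(\wh h_s)_*$ vanish for $s \ge g+1$. So outside the ``central band'' $|j/q| \lesssim g$ the truncated mapping cone behaves like the stable model, and the standard Ozsv\'ath--Szab\'o truncation argument shows that $H_*(\X_{p/q}^+)$ is computed by a finite sub/quotient complex involving only finitely many $\wh A_{j/q}$ and $\wh B$'s; this contributes the ``$-pb$'' correction (there are $p$ more $\wh B$'s than relevant $\wh A$'s in a period, roughly, and the extra $\wh v$/$\wh h$ isomorphisms at the ends cancel them in pairs). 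The first two groups of terms — $q(\rank\ker(\wh v_0)_* + b - \rank(\wh v_0)_*)$ and $2q\sum_{s=1}^{g-1}(\cdots)$ — come from the copies of $\wh A_{j/q}$ with $\lfloor j/q\rfloor$ in the range $\{-(g-1),\dots,g-1\}$: for each such residue $s$ with $s\ne 0$ there are $2q$ values of $j$ in a period of length $pq$ with $\lfloor j/q\rfloor = s$ and likewise $2q$ with $\lfloor j/q\rfloor=-s$, and since $\wh v_s$ and $\wh h_{-s}$ are equivalent these pair up contributing $\ker + \coker$ each; for $s=0$ there are only $q$ such values, giving the single (not doubled) term.

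Next, the crucial point where blow-up unknottedness enters: for such $K$ we have $\im(\wh v_k)_* \supset \im(\wh h_k)_*$ for all $k \ge 0$ and $(\wh v_0)_* = (\wh h_0)_*$, hence (via the $\wh v_s \leftrightarrow \wh h_{-s}$ equivalence) $\im(\wh h_k)_* \supset \im(\wh v_k)_*$ for $k \le 0$. This is exactly the hypothesis of Lemma~\ref{lem:cancellation-general}: every element of any $H_*(\wh A_{j/q})$ can be ``completed'' by a cancellation tail to an element whose image under $(\wh D_{p/q})_*$ is just $(\wh v_{j/q})_*(x)$ (for $\lfloor j/q\rfloor \ge 0$) or $(\wh h_{j/q})_*(x)$ (for $\lfloor j/q\rfloor \le 0$). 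This lets me compute $\rank(\wh D_{p/q})_*$ and the corresponding kernel contribution precisely: the image of $(\wh D_{p/q})_*$ inside $H_*(\wh\B)$, after the cancellation, is spanned (over a fundamental domain of $r$) by the images $\im(\wh v_{j/q})_*$ together with $\im(\wh h_{j/q})_*$ in each copy of $\wh B$, and two such contributions land in the same $\wh B_{j/q}$ precisely for the indices giving the intersection term $\rank(\im(\wh v_{j/q})_* \cap \im(\wh h_{(j-p)/q})_*)$ — this accounts for the $2t_K^{p/q}$. I would organize this by writing $H_*(\X_{p/q}^+) = \ker(\wh D_{p/q})_* \oplus \coker(\wh D_{p/q})_*$ and computing each summand's rank via the cancellation lemma, being careful with the truncation so that all sums are finite and the ``at infinity'' isomorphisms account for $-pb$.

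The main obstacle I expect is the truncation/periodicity bookkeeping: making rigorous that the (a priori infinite product) mapping cone has homology computed by a finite complex, getting the count of $\wh B$ copies versus $\wh A_{j/q}$ copies in a period exactly right so that the $-pb$ term appears with the correct sign and the boundary isomorphisms cancel in matched pairs, and verifying that the cancellation tails produced by Lemma~\ref{lem:cancellation-general} can be chosen consistently (no double-counting) so that $\rank(\wh D_{p/q})_*$ is exactly $pb - t_K^{p/q} - (\text{the kernel-correction terms})$. The ranks $\rank\ker(\wh v_s)_* + b - \rank(\wh v_s)_*$ are just $\rank\ker(\wh v_s)_* + \rank\coker(\wh v_s)_*$, i.e. the ``defect'' of $(\wh v_s)_*$ from being an isomorphism, so conceptually each non-central $\wh A_{j/q}$ contributes its $\wh v$-defect and each central $\wh B_{j/q}$ is killed except for the overlap term — but turning that slogan into a clean alternating count over the fundamental domain, uniformly in $p,q$, is the delicate part. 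Once the counting is pinned down the identity \eqref{eq:rank-formula-pos-general} follows by collecting terms.
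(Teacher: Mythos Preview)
Your plan is essentially the paper's proof. Both arguments use the rational surgery formula to identify $\wh{HF}(Y_{p/q}(K))$ with $H_*(\wh\X_{p/q})$, truncate to a finite complex via Lemma~\ref{lem:genus-finiteness}, and then use the cancellation lemma (Lemma~\ref{lem:cancellation-general}), which is where the blow-up unknotted hypothesis enters, to compute the rank. The only cosmetic difference is organizational: the paper builds an explicit basis $S=\{\wt x_{j/q}^i\}\cup\{\wt y_{j/q}^i\}$ for $\ker(\wh D_{p/q})_*$ (elements of $\ker(\wh v_{j/q})_*$ or $\ker(\wh h_{j/q})_*$ completed by cancellation tails, plus the ``overlap'' elements for $0\le j\le p-1$ mapping into $\im(\wh v_{j/q})_*\cap\im(\wh h_{(j-p)/q})_*$), proves by an inductive peeling argument that $S$ spans, and then gets $\rank\coker$ by rank--nullity. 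Your formulation via $\rank H_*(MC(f))=\rank H_*(\text{source})+\rank H_*(\text{target})-2\rank f_*$ is the same computation read through the image rather than the kernel.

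One small slip in your bookkeeping sketch: for each fixed integer $s$ there are exactly $q$ indices $j$ with $\lfloor j/q\rfloor=s$, not $2q$; the factor of $2$ in front of $q\sum_{s=1}^{g-1}(\cdots)$ comes from pairing $s$ with $-s$ via the $\wh v_s\leftrightarrow\wh h_{-s}$ equivalence, not from a doubled count at a single $s$. This does not affect the strategy, only the wording of that paragraph. The ``delicate part'' you flag---consistency of cancellation tails and no double-counting---is exactly what the paper handles by proving that the set $S$ is linearly independent and spans, so when you execute the argument you should expect to write down that spanning proof.
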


Note that if $Y=S^3$, Equation~\ref{eq:rank-formula-pos-general} agrees with Equation 40 in \cite{oz-sz-rational}. Indeed, define $\nu(K)$ as in \cite{oz-sz-rational}. If $\nu(K)>0$, then
$$\rank\left(\im(\wh v_{j/q})_* \cap \im(\wh h_{(j-p)/q})_*\right) = \begin{cases}
    1 & j\ge q\nu(K) \text{ and } j<p-q(\nu(K)-1) \\
    0 & \text{otherwise.}
\end{cases}$$ Thus,
$$t_K^{p/q}=\max(0,p-(2\nu(K)-1)q).$$ Also, for all $s\ge 0$,
$$\rank\ker(\wh v_s)_*=\begin{cases}
    \rank H_*(\wh A_s)_* & s<\nu(K) \\
    \rank H_*(\wh A_s)_*-1 & s\ge\nu(K),
\end{cases}$$ so the two equations agree.

If $\nu(K)=0$, then $t_K^{p/q}=p$ and $\rank\ker(\wh v_s)_*=\rank H_*(\wh A_s)_*-1$ for all $s\ge 0$, so again the two equations agree.

\begin{proof}
By Theorem~\ref{thm:rational-surgery-formula}, $\wh{HF}(Y_{p/q}(K))\cong H_*(\wh{\mathbb X}_{p/q})$, the homology of the mapping cone of $\wh D_{p/q}$ from $\wh{\A}=\oplus_{i\in\Z/p\Z}\wh\A_i$ to $\wh{\B}=\oplus_{i\in\Z/p\Z}\wh\B_i$ (we have omitted the slope $p/q$ from the notation and summed over all $\Spin^c$ structures on $Y$). So
$$\wh\A=\bigoplus_{i=0}^{p-1}\prod_{r\in\Z}\wh A_{(i+pr)/q}=\prod_{j\in\Z}\wh A_{j/q}$$ has $q$ copies of $\wh A_s$ for each $s\in\Z$, and under the map $\wh v$, each $\wh A_{j/q}$ corresponds to a copy of $\wh B$ in $\wh\B$, which we shall denote $\wh B_{j/q}$. Then $\wh h_{j/q}$ maps $\wh A_{j/q}$ to $\wh B_{(j+p)/q}$. Thus, for each $s\in\Z$, there are $p$ copies of $\wh B_s$ in the codomain of $\wh h_{s-1}$ and $q-p$ copies of $\wh B_s$ in the codomain of $\wh h_s$.

The finiteness of $H_*(\wh\X_{p/q})$ is guaranteed by Lemma~\ref{lem:genus-finiteness}; in fact, it implies that $\wh\X_{p/q}$ is isomorphic to a truncated version of the mapping cone
$$\wh\X_{p/q}^c=MC(\wh D_{p/q}^c:\wh\A_{p/q}^c\to\wh\B_{p/q}^c),$$ for some large $c$, where
$$\wh\A_{p/q}^c=\bigoplus_{j=-qc+1}^{qc-1}\wh A_{j/q}=\bigoplus_{s=-c+1}^{c-1}\bigoplus_{i=0}^{q-1}\wh A_s, \hspace{5mm} \wh\B_{p/q}^c=\bigoplus_{j=-qc+p+1}^{qc-1}\wh B_{j/q},$$
and $\wh D_{p/q}^c$ is the restriction of $\wh D_{p/q}$ to $\wh A_{p/q}^c$, with the exception that for any $x\in\wh A_{j/q}$ with $-qc+1\le j\le-qc+p$, $\wh D_{p/q}^c(x)=\wh h_{j/q}(x)$, and for any $x\in\wh A_{j/q}$ with $qc-p\le j\le qc-1$, $\wh D_{p/q}^c(x)=\wh v_{j/q}(x)$. Note that it suffices to take $c\ge g+p/q+1$, and
$$\rank H_*(\wh\A_{p/q}^c) = q\sum_{s=-c+1}^{c-1}\rank H_*(\wh A_s), \hspace{5mm} \rank H_*(\wh\B_{p/q}^c) = qb(2c-1)-pb.$$

We construct a basis for $\ker(\wh D_{p/q})_*$ of the entire mapping cone, which has the same rank as $\ker(\wh D_{p/q}^c)_*$ for large $c$ by Lemma~\ref{lem:genus-finiteness}. For $j\ge 0$, let
$$\{ x_{j/q}^i\}_{i=1}^{\rank\ker(\wh v_{j/q})_*}$$
be a basis of $\ker(\wh v_{j/q})_*$. For each such $x_{j/q}^i$, choose an $\alpha_{j/q}^i\in H_*(\oplus_{k\ge j+p}\wh A_{k/q})$ according to Lemma~\ref{lem:cancellation-general} such that $\wt x_{j/q}^i=x_{j/q}^i+\alpha_{j/q}^i$ is in $\ker(\wh D_{p/q})_*$. For $j<0$, let 
$$\{x_{j/q}^i\}_{i=1}^{\rank\ker(\wh h_{j/q})_*}$$ 
be a basis of $\ker(\wh h_{j/q})_*$. For each such $x_{j/q}^i$, choose a $\beta_{j/q}^i\in H_*(\oplus_{k\le j-p}\wh A_{k/q})$ according to Lemma~\ref{lem:cancellation-general} such that $\wt x_{j/q}^i=x_{j/q}^i+\beta_{j/q}^i$ is in $\ker(\wh D_{p/q})_*$. Lastly, for $0\le j\le p-1$, let 
$$\{y_{j/q}^i\}_{i=1}^{\rank\left(\im(\wh v_{j/q})_* \cap \im(\wh h_{(j-p)/q})_*\right)}$$
be a basis of a subspace of $H_*(\wh A_{j/q})$ that is mapped isomorphically to $\im(\wh v_{j/q})_* \cap \im(\wh h_{(j-p)/q})_*$ under $(\wh v_{j/q})_*$. Choose $z_{j/q}^i\in H_*(\wh A_{(j-p)/q})$ such that $(\wh v_{j/q})_*(y_{j/q}^i)=(\wh h_{(j-p)/q})_*(z_{j/q}^i)$. Choose $\gamma_{j/q}^i\in H_*(\oplus_{k\ge j+p}\wh A_{k/q})$ and $\delta_{j/q}^i\in H_*(\oplus_{k\le j-p}\wh A_{k/q})$ according to Lemma~\ref{lem:cancellation-general} such that $\wt y_{j/q}^i=y_{j/q}^i+z_{j/q}^i+\gamma_{j/q}^i+\delta_{j/q}^i$ is in $\ker(\wh D_{p/q})_*$.

The set $$S=\{\wt x_{j/q}^i\}\cup\{\wt y_{j/q}^i\}$$ is linearly independent, and it is finite by Lemma~\ref{lem:genus-finiteness}. Moreover, it spans $\ker(\wh D_{p/q})_*$: let $z\in\ker(\wh D_{p/q})_*$ be nonzero, and suppose first that $z\in H_*(\oplus_{k\ge 0}\wh A_{k/q})$. Let $z_{j/q}$ denote the term of $z$ in $H_*(\wh A_{j/q})$, and let
$$j=\min\{k\ge 0\ \vert\ z_{k/q}\ne 0\}.$$ Then $z\in\ker(\wh D_{p/q})_*\cap H_*(\oplus_{k\ge j}\wh A_{k/q})$, so $z_{j/q}\in\Span \{x_{j/q}^i\}_{i=1}^{\rank\ker(\wh v_{j/q})_*}$: otherwise if $z_{j/q}\notin\ker(\wh v_{j/q})_*$, then since $z\in H_*(\oplus_{k\ge j}\wh A_{k/q})$, we have $z_{(j-p)/q}=0$ so $$(\wh v_{j/q})_*(z_{j/q})+(\wh h_{(j-p)/q})_*(z_{(j-p)/q})\ne 0\in H_*(\wh B_{j/q})$$ which means $z\notin\ker(\wh D_{p/q})_*$. Thus, we can subtract some combination of the basis elements $\wt x_{j/q}^i$ to eliminate the $H_*(\wh A_{j/q})$ component of $z$:
$$z-\sum_{i\in J}\wt x_{j/q}^i\in\ker(\wh D_{p/q})_*\cap H_*(\oplus_{k>j}\wh A_{k/q})$$ for some indexing set $J\subset\{1,\dots,\rank\ker(\wh v_{j/q})_*\}$. Inductively, we can then write 
$$z=\sum_{k\in I}\sum_{i_k\in J_k}\wt x_{k/q}^{i_k}$$ for some finite indexing sets $I\subset\{k\ge j\}$, $J_k\subset\{1,\dots,\rank\ker(\wh v_{k/q})_*\}$. Similarly, if $z\in H_*(\oplus_{k<0}\wh A_{k/q})$, then we can write
$$z=\sum_{k\in I}\sum_{i_k\in J_k}\wt x_{k/q}^{i_k}$$ for some finite indexing set $I\subset\{k\le j\}$, $J_k\subset\{1,\dots,\rank\ker(\wh h_{k/q})_*\}$.

Now for the general case $z\in\ker(\wh D_{p/q})_*$, we claim that for each $0\le j\le p-1$, $$z_{j/q}\in\Span\left(\{y_{j/q}^i\}\cup\{x_{j/q}^i\}\right).$$ Indeed, suppose this is not true for some $0\le j\le p-1$, then $(\wh v_{j/q})_*(z_{j/q})\notin\im(\wh h_{(j-p)/q})_*$, so $(\wh D_{p/q})_*(z)\ne 0$, a contradiction. Thus,
\begin{align*}
z-\sum_{j=0}^{p-1}\left(\sum_{i\in J_j}\wt y_{j/q}^i+\sum_{i\in J_j'}\wt x_{j/q}^i\right) &\in \left(\ker(\wh D_{p/q})_*\cap H_*(\oplus_{k\ge p}\wh A_{k/q})\right) \\
&\hspace*{5mm}+\left(\ker(\wh D_{p/q})_*\cap H_*(\oplus_{k<0}\wh A_{k/q})\right).
\end{align*} By the above argument, $z$ is in the span of $S$.

Therefore, $S$ is a basis for $\ker(\wh D_{p/q})_*$, so since $\rank\ker(\wh v_s)_*=\rank\ker(\wh h_{-s})_*$ by the hat version of Lemma~\ref{lem:v-h-equiv} and these are $0$ when $s\ge g$ by Lemma~\ref{lem:genus-finiteness},
$$\rank\ker(\wh D_{p/q})_*=q\rank\ker(\wh v_0)_*+2q\sum_{s=1}^{g-1}\rank\ker(\wh v_s)_*+t_K^{p/q}.$$

Since $\rank(\wh v_s)_*=\rank H_*(\wh A_s)-\rank\ker(\wh v_s)_*$ and $\rank\coker(\wh D_{p/q}^c)_*=\rank H_*(\wh\B_{p/q}^c)-\rank H_*(\wh\A_{p/q}^c)+\rank\ker(\wh D_{p/q}^c)_*$ for some large $c$, it follows that
\begin{align*}
    \rank\coker(\wh D_{p/q}^c)_* &= b(2c-1)q-pb-q\sum_{s=-c+1}^{c-1} \rank H_*(\wh A_s) \\
        &\hspace*{5mm}+ q\rank\ker(\wh v_0)_*+2q\sum_{s=1}^{g-1}\rank\ker(\wh v_s)_*+t_K^{p/q} \\
        &= q(b-\rank H_*(\wh A_0)+\rank\ker(\wh v_0)_*) \\
        &\hspace*{5mm}+ 2q\sum_{s=1}^{g-1}(b-\rank H_*(\wh A_s)+\rank\ker(\wh v_s)_*) +t_K^{p/q}-pb \\
        &= q(b-\rank(\wh v_0)_*)+2q\sum_{s=1}^{g-1}(b-\rank(\wh v_s)_*)+t_K^{p/q}-pb,
\end{align*}
where the second equality uses that $\rank H_*(\wh A_s)=b$ for all $|s|\ge g$ by Lemmas~\ref{lem:v-h-equiv} and ~\ref{lem:genus-finiteness}. Therefore, because 
$$\rank(H_*(\wh\X_{p/q}))=\rank\ker(\wh D_{p/q})_*+\rank\coker(\wh D_{p/q})_*$$ and $$\rank\ker(\wh D_{p/q})_*=\rank\ker(\wh D_{p/q}^c)_*,\hspace*{5mm}\rank\coker(\wh D_{p/q})_*=\rank\coker(\wh D_{p/q}^c)_*$$ for large enough $c$, equation~\ref{eq:rank-formula-pos-general} holds.

\end{proof}

Now we are ready to prove our main theorems. The main ingredient for each result is Equation~\ref{eq:rank-formula-pos-general}.

\begin{theorem}
Let $K\subset Y$ be a blow-up bounding knot, and suppose $0<r<s\le 1$ are distinct positive rational numbers with the property that $Y_r(K)\cong\pm Y_s(K)$. Then $K$ is the unknot. Furthermore, if $K\subset Y$ is a blow-up unknotted knot and $0<r<s$ are distinct positive rational numbers such that $Y_r(K)\cong\pm Y_s(K)$, then $r,s>1$ or $K$ is the unknot.
\end{theorem}
\begin{proof}
Let $K\subset Y$ be blow-up bounding. Since $H_1(Y_{\pm p/q}(K))\cong H_1(Y)\oplus\Z/p\Z,$ we can fix $p>0$ throughout. First, let $q$ be relatively prime to $p$ and $q\ge p$. Then $\im(\wh v_{j/q})_* \cap \im(\wh h_{(j-p)/q})_*=\im(\wh v_0)_*$ for all $j=0,\dots,p-1$. Thus $t_K^{p/q}=p\rank(\wh v_0)_*$. Therefore, according to equation~\ref{eq:rank-formula-pos-general}, for positive integral $q$ with $0<p/q\le 1$, $\rank\wh{HF}(Y_{p/q}(K))$ is a nondecreasing function of $q$. The function is strictly increasing unless $(\wh v_s)_*$ is an isomorphism for all $s\ge 0$. Hence, by Proposition~\ref{prop:genus}, $K$ is the unknot.

Since the total rank of $\wh{HF}$ is an invariant of the underlying (unoriented) three-manifold, the first part of the theorem is true. Also, the second part is true for $0<r<s\le 1$ since blow-up unknotted knots are blow-up bounding.

Now suppose $K$ is blow-up unknotted and $q,q'$ are relatively prime to $p$ and $0<q<p<q'$. Then, with $b=\rank(H_*(\wh B))$,
\begin{align}\label{eq:t-ineq}
\begin{split}
    t_K^{p/q}-t_K^{p/q'} &\le \sum_{j=0}^{p-1} \rank(\wh v_{j/q})_* - p\rank(\wh v_0)_* \\
    &= \sum_{j=q}^{p-1} \rank(\wh v_{j/q})_* - (p-q)\rank(\wh v_0)_* \\
    &\le (p-q)(b-\rank(\wh v_0)_*) \\
    &= 0
\end{split}
\end{align} since $(\wh v_0)_*$ is surjective by Lemma~\ref{lem:rank-a-b}. By equation~\ref{eq:rank-formula-pos-general},
\begin{align*}
    \rank\wh{HF}(Y_{p/q'}(K))&-\rank\wh{HF}(Y_{p/q}(K)) = (q'-q)(\rank\ker(\wh v_0)_*+b-\rank(\wh v_0)_*) \\
    &+ (q'-q)\left(2\sum_{s=1}^{g-1}(\rank\ker(\wh v_s)_*+b-\rank(\wh v_s)_*)\right) \\
    &+ 2t_K^{p/q'}-2t_K^{p/q}. \\
\end{align*}
The entire sum from $s=1$ to $g-1$ on the right hand side is nonnegative, so we ignore this term, and by Lemma~\ref{lem:rank-a-b}, we have 
$$\rank\ker(\wh v_0)_*+b-\rank(\wh v_0)_*= \rank\ker(\wh v_0)_*.$$ Together with inequality~\ref{eq:t-ineq}, we get
\begin{align*}
    \rank\wh{HF}(Y_{p/q'}(K))-\rank\wh{HF}(Y_{p/q}(K)) &\ge (q'-q)\rank\ker(\wh v_0)_*\\
    &\ge 0,
\end{align*} with equality only if $(\wh v_s)_*$ is an isomorphism for all $s\ge 0$, hence $K$ is the unknot by Lemma~\ref{prop:genus}.

Since the total rank of $\wh{HF}$ is an invariant of the underlying (unoriented) three-manifold, the result holds.
\end{proof}

In the case where $p$ is $1$ or $2$, the following corollary is immediate, since there do not exist distinct positive integers $q,q'$ coprime to $p$ with $p/q$ and $p/q'$ both greater than $1$.

\begin{corollary}
Let $K\subset Y$ be a blow-up unknotted knot, and suppose $0<q'<q$ are distinct positive numbers with the property that $Y_{p/q}(K)\cong\pm Y_{p/q'}(K)$ for $p\in\{1,2\}$, then $K$ is the unknot.
\end{corollary}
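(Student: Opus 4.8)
The plan is to derive the corollary directly from the preceding Theorem together with its proof, specializing to $p\in\{1,2\}$. First I would observe that the Theorem's conclusion is that either $r,s>1$ or $K$ is the unknot, where $r=p/q$ and $s=p/q'$. Since $q,q'$ are positive integers coprime to $p$ and we are told the two surgeries on $K$ give homeomorphic manifolds, the Theorem applies with $(r,s)$ being the pair $(p/q,p/q')$ in increasing order. So it suffices to rule out the case $r,s>1$, i.e. to show that there cannot exist \emph{two distinct} positive integers $q,q'$ coprime to $p$ with both $p/q>1$ and $p/q'>1$.

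The key step is the elementary number-theoretic observation stated in the sentence preceding the corollary: if $p/q>1$ with $q$ a positive integer, then $q<p$, so $q\in\{1,\dots,p-1\}$, and additionally $q$ must be coprime to $p$. For $p=1$ there are no positive integers $q$ with $q<1$, so the set of admissible $q$ is empty — in particular there cannot be two distinct such $q$. For $p=2$, the only positive integer $q<2$ is $q=1$, which is indeed coprime to $2$, so the set of admissible $q$ is the single element $\{1\}$; again there cannot be two distinct admissible values. Hence in both cases $p\in\{1,2\}$ the hypothesis ``$r,s>1$'' is vacuous: it is impossible to have distinct $q'<q$ both giving surgery coefficient exceeding $1$. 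Therefore the only surviving alternative in the Theorem's conclusion is that $K$ is the unknot.

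To assemble the proof I would: (1) set $r=p/q$, $s=p/q'$ (relabelling so that $r<s$ if necessary, using that $q'<q$ forces $p/q'>p/q$), and note the homeomorphism hypothesis $Y_r(K)\cong\pm Y_s(K)$ is exactly that of the Theorem; (2) apply the Theorem to conclude $r,s>1$ or $K$ is the unknot; (3) argue by the counting above that $r,s>1$ cannot both hold when $p\in\{1,2\}$, since that would require two distinct positive integers in a set of size at most one; (4) conclude $K$ is the unknot. No step here is a genuine obstacle — the corollary is essentially a bookkeeping consequence — so the ``hard part'' is really just stating the trivial count cleanly; there is no analytic or topological content beyond invoking the Theorem. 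I would keep the write-up to two or three sentences, since the excerpt already flags the relevant observation in the sentence introducing the corollary.

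\begin{proof}
Write $r=p/q$ and $s=p/q'$; since $0<q'<q$ we have $r<s$, and the hypothesis is precisely that $Y_r(K)\cong\pm Y_s(K)$. By the previous Theorem, either $r,s>1$ or $K$ is the unknot. If $r,s>1$, then both $q$ and $q'$ are positive integers strictly less than $p$ and coprime to $p$. For $p=1$ there is no positive integer less than $1$, and for $p=2$ the only positive integer less than $2$ is $1$; in either case there do not exist two distinct such values. Hence the case $r,s>1$ is impossible, and $K$ must be the unknot.
\end{proof}
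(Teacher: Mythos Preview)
Your proposal is correct and follows exactly the paper's own argument: the paper states the corollary is immediate from the preceding Theorem together with the observation that for $p\in\{1,2\}$ there do not exist two distinct positive integers $q,q'$ coprime to $p$ with $p/q$ and $p/q'$ both greater than $1$. Your write-up simply spells out this one-line remark.
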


\begin{theorem}
Let $K\subset Y$ be a nontrivial blow-up bounding knot and $q\ge 2$, then $Y_{1/q}(K)\ncong\pm Y$. Furthermore, if $K$ is a nontrivial blow-up unknotted knot, then $Y_1(K)\ncong\pm Y$.
\end{theorem}
\begin{proof}
Let $q\ge 2$, then by Equation~\ref{eq:rank-formula-pos-general},
$$\rank\wh{HF}(Y_{1/q}(K))\ge (b-\rank(\wh v_0)_*)(q-1)+\rank(\wh v_0)_*+q\rank\ker(\wh v_0)_*\ge b,$$ and it is strictly greater unless $(\wh v_s)_*$ is an isomorphism for all $s\ge 0$. Hence, by Proposition~\ref{prop:genus}, $K$ is the unknot.

If $q=1$, then
$$\rank\wh{HF}(Y_1(K))\ge \rank(\wh v_0)_*+\rank\ker(\wh v_0)_*=\rank H_*(\wh A_0)\ge b$$ by Lemma~\ref{lem:rank-a-b} since $K$ is blow-up unknotted, and it is strictly greater unless $(\wh v_s)_*$ is an isomorphism for all $s\ge 0$, which implies $K$ is the unknot.

Since the total rank of $\wh{HF}$ is an invariant of the underlying (unoriented) three-manifold, the result holds.
\end{proof}

\textbf{Acknowledgements} The author would like to thank Yi Ni for advising him in this project and the referee for many helpful comments.

\bibliographystyle{alpha} 
\bibliography{biblio}

\end{document}